\newtheorem{theorem}{Theorem}[section]
\newtheorem{lemma}[theorem]{Lemma}
\newtheorem{proposition}[theorem]{Proposition}
\newtheorem{corollary}[theorem]{Corollary}
\theoremstyle{definition}
\newtheorem{definition}[theorem]{Definition}
\theoremstyle{remark}
\numberwithin{equation}{section}
\DeclareMathOperator{\supp}{supp}
\begin{document}

\title{\textbf{\emph{S}-Noetherian generalized power series rings}}

\author {\bf  F. Padashnik, A. Moussavi and H. Mousavi}
\date{}
\maketitle
\begin{center}

{\small Department of Pure Mathematics, Faculty of Mathematical
Sciences,\\ Tarbiat Modares University, Tehran, Iran, P.O. Box:
14115-134.{\footnote {\noindent Corresponding author.
moussavi.a@modares.ac.ir and  moussavi.a@gmail.com.\\
\indent f.padashnik@modares.ac.ir \\
\indent h.moosavi@modares.ac.ir .}}
}\\
\end{center}

\date{}
\maketitle

\begin{abstract}
Let  $R$ be a ring with identity, $(M,\leq)$  a commutative positive strictly ordered monoid and $\omega_{m}$  an automorphism for each $m\in M$.   The skew generalized power series ring $R[[M,\omega]]$ is a common generalization of (skew) polynomial rings, (skew)
power series rings, (skew) Laurent polynomial rings, (skew) group rings, and Mal'cev Neumann Laurent  series rings. If $S\subset R$ is a  multiplicative set, then $R$ is called right $S$-\emph{Noetherian}, if for each ideal $I$ of $R$, $Is\subseteq J \subseteq I$ for some $s \in S$ and some finitely generated right ideal $J$. Unifying and generalizing a number of known results, we study
transfers of $S$-Noetherian property to the  ring $R[[M,\omega]]$.
We also show that the ring $R[[M,\omega]]$
is left Noetherian if and only if $R$ is left Noetherian and $M$ is finitely generated.  Generalizing a result of Anderson and Dumitrescu, we show that,when $S\subseteq R$ is a $\sigma$-anti-Archimedean multiplicative set with $\sigma$  an automorphism of $R$, then $R$ is right  $S$-Noetherian  if and only if the skew polynomial ring $R[x;\sigma]$ is  right $S$-Noetherian.
\end{abstract}

\textit{Keywords:} $S$-Noetherian ring, skew generalized power
series ring;  right archimedean ring;   skew Laurent series ring; skew 
polynomial ring.

\textit{Subject Classification: $16P40; 16D15; 16D40; 16D70; 16S36$}

\section{Introduction}

Throughout this paper, $R$ is a ring (not necessary commutative) with identity.
In \cite{anderson KZ}, the authors introduced the concept of ``almost
finitely generated" to study Querr$\acute{e}$'s characterization of divisorial ideals in integrally
closed polynomial rings. Later, Anderson and Dumitrescu \cite{anderson} abstracted this notion
to any commutative ring and defined a general concept of Noetherian rings. They
call $R$ an $S$-Noetherian ring if each ideal of $R$ is $S$-finite, i.e., for each ideal $I$ of $R$,
there exist an $s\in S$ and a finitely generated ideal $J$ of $R$ such that $Is\subseteq J \subseteq I$. By \cite[
Proposition 2(a)]{anderson}, any integral domain $R$ is ($R \setminus \{0\}$)-Noetherian; so an $S$-Noetherian
ring is not generally Noetherian. Also, $M$ is said to be $S$-finite if there exist an $s\in S$
and a finitely generated $R$-submodule $F$ of $M$ such that $sM \subseteq F.$ Also, $M$ is called
$S$-Noetherian if each submodule of $M$ is $S$-finite.
In \cite{anderson}, the authors gave a number of $S$-variants of well-known results for
Noetherian rings: $S$-versions of Cohen's result, the Eakin-Nagata theorem, and
the Hilbert basis theorem under an additional condition. More precisely, in \cite[
Propositions 9 and 10]{anderson}, the authors showed that, if $S$ is an anti-Archimedean
subset of an $S$-Noetherian ring $R$, then the polynomial ring $R[X_1,\cdots,X_n]$ is also
an $S$-Noetherian ring; and if $S$ is an anti-Archimedean subset of an $S$-Noetherian
ring $R$ consisting of nonzero divisors, then the power series ring $R[[X_1,\cdots,X_n]]$ is
an $S$-Noetherian ring. Note that if $S$ is a set of units of $R$, then the results above
are nothing but the Hilbert basis theorem and a well-known fact that $R[[X]]$ is
Noetherian if $R$ is Noetherian. In \cite[Theorem 2.3]{liu}, Liu generalized this result to the
ring of generalized power series as follows: If $S$ is an anti-Archimedean subset of a
ring $R$ consisting of nonzero divisors and 	($\Gamma,\leq$)
 is a positive strictly ordered monoid
(defined in Secion 4), then $R[[M,\le]]$ is $S$-Noetherian if and only if $R$ is $S$-Noetherian
and $\Gamma$ is finitely generated. Note that this recovers the result for the Noetherian case
shown in \cite[, Theorem 4.3]{brook} when $S$ is a set of units.
Also, the authors in \cite{lim ex} study on transfers of  the $S$-Noetherian property to the constructions
$D+(X_1,\cdots,X_n)E[X_1,\cdots,X_n]$ and $D+(X_1,\cdots,X_n)E[[X_1,\cdots,X_n]]$
and Nagata's idealization is studied in \cite{lim amal}.\par

The authors in \cite[Theorem 7.7, page(65)]{Gilmer} proved that
$R[M]$ is Noetherian if and only if $R$ is Noetherian and $M$ is finitely generated.
 Brookfield \cite{brook} proved  that if $(M,\le)$ is a commutative positively ordered monoid,
then $R[[M,\le]]$ is right Noetherian if and only if $R$ is right Noetherian and $M$ is finitely generated.\par

Ribenboim  \cite{Ribenboim-noeth}
  and
 Varadarajan \cite{vara},
   have carried out an extensive study of
rings of generalized power series. They investigated conditions under which a
ring of generalized power series $R[[M,\le]]$ is Noetherian, where $R$ is a commutative
ring with identity and $(M,\le)$ is a strictly ordered monoid.\par

In this paper we obtain results pertaining to Noetherian nature of
generalized power series rings. These considerably strengthen earlier
results of Ribenboim \cite{Ribenboim-noeth}, Varadarajan \cite{vara}, Brookfield \cite{brook}, D. D. Anderson, and T. Dumitrescu\cite{anderson}, D. D. Anderson, B. G. Kang, and  M. H. Park \cite{andersonI} , D. D. Anderson, D. J. Kwak, M. Zafrullah \cite{anderson KZ} on this topic.\par

More precisely, we show that, if $S$ is an $\sigma$-anti-Archimedean multiplicative
subset of an $S$-Noetherian ring $R$ with an automorphism $\sigma$, then the skew polynomial ring $R[x;\sigma]$ is also
an $S$-Noetherian ring; and if $(M,\le)$ is a commutative positively ordered monoid and $\omega_m$
is an automorphism over $R$ for every $m\in M$, then the skew generalized power series ring $R[[M,\omega]]$ is right Noetherian if and only if $R$ is right Noetherian and $M$ is finitely generated.
When $(M,\leq)$ is a commutative positive strictly ordered monoid and $\omega_{m}$ is an automorphism for each $m\in M$,
we unify and generalize the above mentioned results, and study
transfers of $S$-Noetherian property to the skew generalized power series ring $R[[M,\omega]]$.

\section{S-Noetherian property on skew polynomial rings}

If $R$ is a commutative ring
and $S$ is a multiplicative subset of $R$,
in \cite{anderson}, the authors proved  that the necessary condition
for the ring of fractions $R_S$ to be a Noetherian ring is that $R$ be an
$S$-Noetherian ring. In noncommutative rings, the
situation is  more complicated.  In fact, if $S$ is a right (resp., left)
permutable and right (resp., left) reversible (i.e $S$ is
right (resp., left) denominator set), then $R$ has a ring
of fraction $RS^{-1}$ (resp., $S^{-1}R$). In this situation,
denominator sets (both left and right denominator sets) act
like a multiplicatively closed sets in the commutative case. Our
interest in this note is multiplicatively closed subsets (i.e.
denominator subsets) in noncommutative rings.
 First we
define the notion of $S$-Noetherian rings for noncommutative rings.

\begin{definition}
Let $R$ be a ring and $S$ a multiplicative subset
of $R$. An ideal $I$ of $R$ is called right $S$-\emph{finite} (resp., $S$-\emph{principal}),
if there exists a finitely generated (resp., principal) right ideal $J$ of
$R$ and some $s\in S$ such that $Is\subseteq J \subseteq I$.\\

A ring $R$ is said to be right $S$-\emph{Noetherian} (resp., $S$-\emph{PRIR}),
if each right ideal of $R$ is right $S$-\emph{finite} (resp., $S$-\emph{principal}).
This definition can be done similarly for left side ideals.\\

Also, we say that an $R$-module $M$ is right (or left)
$S$-\emph{finite }if $Ms\subseteq F$ (resp., $sM\subseteq F$) for some
$s\in S$ and a finitely generated submodule $F$ of $M$.
A module $M$ is called right (or left) $S$-\emph{Noetherian}
if each submodule of $M$ is
a right (or left) $S$-finite module.
\end{definition}

The author in \cite{anderson} justified the definition of $S$-Noetherian for commutative rings
by proving some interesting properties of $S$-Noetherian ring. For
example, they showed that if $R$ is $S$-Noetherian, then the ring of fractions $R_S$ is
Noetherian and they found the conditions for the reverse of this proposition.\\

 Given rings $R,T$,  an ideal $J$ of $T$
is said to be extended, if there exists an ideal $I$ of $R$ such that $\varphi(I)=J$
where $\varphi:R\longrightarrow T$ is a ring monomorphism. Also,
a ring $R$ is von Neumann regular if for every $a \in R$ there
exists an $x$ in $R$ such that $a=axa$. The center of a ring $R$ is denoted
by $Cent(R)$.

\begin{proposition}\label{tie}
Let $R$ be a ring, $S\subseteq R$  a multiplicative  set
and $I$ a right ideal of $R$.

\emph{1}) If $R$ is  von Neumman regular, $S$  a denominator set  and $I\cap S\neq \emptyset$,
then $I$ is right $S$-principal.

\emph{2}) If $S\subseteq T$ are right denominator subsets of $R$
and $R$ is right $S$-Noetherian(resp., $S$-PRIR), then
$R$ is right $T$-Noetherian(resp., $T$-PRIR).

\emph{3}) If $R$ is  von Neumman regular and $S$  a denominator
set, then $R$ is right $S$-Noetherian (resp., $S$-PRIR) if and
only if $R$ is right Noetherian (resp., PRIR).

\emph{4}) If $S$ is a denominator set  and $R$ is right $S$-Noetherian (resp., $S$-PRIR),
 then $RS^{-1}$ is right Noetherian.

\emph{5}) If $S$ is central in $R$, then the conditions \emph{1-4} and those of \cite[Proposition \emph{2}]{anderson} follow.
\end{proposition}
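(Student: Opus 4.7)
The five assertions rest on the same VNR/denominator-set framework, so I would prove them individually. Part~(2) is purely definitional: a witness $Is \subseteq J \subseteq I$ for right $S$-finiteness (respectively right $S$-principality) is automatically a witness for the $T$-version since $s \in S \subseteq T$. Part~(5) is a bookkeeping remark: any central multiplicative set is automatically a two-sided denominator set, so (1)--(4) apply verbatim and specialize to the commutative statements of \cite[Prop.~2]{anderson}.

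For~(1), pick $s \in I \cap S$ and, using that $R$ is von Neumann regular, write $s = sbs$ and set $e := sb$. One checks $e^{2} = e$, $es = s$, and the chain $e = sb \in sR$ together with $s = es \in eR$ gives $sR = eR$; hence $sR$ is a principal right ideal contained in $I$, and the decomposition $R = eR \oplus (1-e)R$ induces $I = sR \oplus L$ with $L := I \cap (1-e)R$. Taking the candidate principal right ideal $J := sR$, the task reduces to producing $t \in S$ with $Lt \subseteq sR \cap (1-e)R = 0$, i.e.\ $Lt = 0$. Element-wise, this follows from right Ore: each $a \in L$ admits $t_{a} \in S$ and $c_{a} \in R$ with $a t_{a} = s c_{a}$, and since simultaneously $a t_{a} \in (1-e)R$ while $s c_{a} \in eR$, we get $a t_{a} = 0$. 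The delicate part is upgrading these element-wise annihilators to a single $t \in S$ with $Lt = 0$; this is where the full denominator-set axioms (Ore plus reversibility) and the rigidity of the VNR summand decomposition must cooperate.

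For~(3), the direction ``$R$ Noetherian $\Rightarrow$ $R$ right $S$-Noetherian'' is immediate with $s = 1$, $J = I$. For the converse, given $I$ with $Is \subseteq J \subseteq I$, write $J = fR$ for an idempotent $f$ (every finitely generated right ideal in a VNR ring is a direct summand), decompose $I = fR \oplus L'$ with $L' := I \cap (1-f)R$, note that $L' \cdot s = 0$ since $L's \subseteq Is \cap (1-f)R \subseteq fR \cap (1-f)R = 0$, and run the same uniform-annihilator argument from~(1) to conclude $L' = 0$, hence $I = J$ is finitely generated (in fact principal). The $S$-PRIR variant is identical.

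For~(4), use contraction-extension: given a right ideal $K$ of $RS^{-1}$, let $I := K \cap R$, a right ideal of $R$, and choose witnesses $s \in S$ and finitely generated $J \subseteq I$ with $Is \subseteq J$. I claim $K = J \cdot RS^{-1}$; the inclusion $\supseteq$ is clear, and for $\subseteq$, any $x = a t^{-1} \in K$ with $a \in R$, $t \in S$ satisfies $a = xt \in K \cap R = I$, so $as \in J$ and $x = (as)(ts)^{-1} \in J \cdot RS^{-1}$ via $(ts)^{-1} = s^{-1} t^{-1}$. Hence $K$ is finitely generated over $RS^{-1}$, proving right Noetherianness. The main obstacle across the proposition is the uniform-annihilator step in~(1) (and its reuse in~(3)): passing from element-wise Ore/reversibility annihilators $t_{a}$ to a single $t \in S$ that kills an entire right submodule.
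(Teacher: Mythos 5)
Parts (2) and (5) match the paper's (essentially definitional) treatment, and your part (4) is correct and in fact a bit cleaner than the paper's: you verify directly that $K=J\cdot RS^{-1}$ for the contracted witness $J$, whereas the paper first shows every right ideal of $RS^{-1}$ is extended and then pushes the witness forward; both are the same contraction--extension idea. The real problems are in (1) and (3). In (1) you reduce the statement to producing a single $t\in S$ with $Lt=0$ for the (possibly infinite) complement $L=I\cap(1-e)R$, and you explicitly leave that step open. The Ore and reversibility axioms only furnish common denominators/annihilators for finitely many elements, so nothing you have written produces a uniform $t$; this is the substance of the claim, not a detail that ``must cooperate.'' The paper takes a route that never needs it: from $sas=s$ and the invertibility of $s$ in $RS^{-1}$ it identifies $a$ with $\frac{1}{s}$, deduces $\frac{1}{s}Rs\subseteq R$, and hence $Is\subseteq Rs=s\frac{1}{s}Rs\subseteq sR\subseteq I$, so in effect $J=sR$ works with the very same $s$ (this silently treats $R$ as embedded in $RS^{-1}$, i.e.\ assumes $S$-torsion-freeness).

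In (3) the flaw is sharper: from $L's=0$ you cannot ``run the uniform-annihilator argument'' to conclude $L'=0$; $L's=0$ only says $L'$ is $S$-torsion. Concretely, take $R=F\times F$ ($F$ a field), $S=\{1,s\}$ with $s=(1,0)$, $I=R$, and the witness $J=fR$ with $f=(1,0)$: then $Is\subseteq J\subseteq I$, yet $L'=I\cap(1-f)R=0\times F\neq 0$ although $L's=0$, and $I\neq J$. So your route ``$L'=0$, hence $I=J$'' collapses. The paper argues differently, using an inner inverse $t$ with $sts=s$ and cancellation of $s$ inside $RS^{-1}$ to get $Is=I$ and hence $I=J$ directly, never needing $L'=0$. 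What both of your steps are really missing is the hypothesis that the canonical map $R\to RS^{-1}$ is injective (e.g.\ $S$ consisting of non-zero-divisors): granting it, your ``delicate part'' in (1) is immediate (each relation $at_a=0$ with $t_a\in S$ forces $a=0$, so $L=0$), and $L's=0$ does give $L'=0$ in (3); without it, your step in (3) is false as stated. Either add that hypothesis explicitly or switch to the paper's cancellation argument.
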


\begin{proof}
1) Let $S\subseteq R$ be a denominator set, $R$  a von Neumman regular ring and $I$  a right ideal of $R$.
Then for each $s\in I\cap S$, one can see that $Is\subseteq Rs=s\frac{1}{s}Rs$,
where $\frac{1}{s}$ is the inverse of $s$ in $RS^{-1}$. It is sufficient to see
that $\frac{1}{s}Rs\subseteq R$.
For each $s\in S$, there exists $a\in R$ such that $sas=s$,
so $sa=s\frac{1}{s}=1$ (in $RS^{-1}$). Thus $sa=1$ and hence $a=\frac{1}{s}$.
Therefore $\frac{1}{s}\in R$ and $Rs\subseteq R$, so $\frac{1}{s}Rs \subseteq R$.

2) Let $S\subseteq T$ be denominator subsets of $R$. If $R$ is right
$S$-Noetherian (resp., $S$-PRIR), then for each right ideal of $R$, there exists
$s\in S$ such that $Is\subseteq J \subseteq I$ for some finitely generated (resp.,
principal) right ideal of $R$. Since $s\in S$, $S\subseteq T$,  $s\in T$ which
means that $R$ is right $T$-Noetherian (resp., $T$-PRIR).

3) Assume that $R$ is a right Noetherian (resp., PRIR) ring. Each right ideal of $R$ is
finitely generated (resp., principal). So for each $s\in S$, one can see that
$Is\subseteq I$. Hence $R$ is right $S$-Noetherian (resp., $S$-PRIR).
On the other hand, assume that $R$ is right $S$-Noetherian (resp., $S$-PRIR),
so there exists $s\in S$ such that $Is\subseteq J\subseteq I$ for some finitely
generated (resp., principal) right ideal of $R$. Also suppose that $sts=s$ for some
$t\in R$. So $Is\subseteq I$. Also, $It\subseteq I$, so $Its\subseteq Is=Ists$. So
$Its.\frac{1}{s}\subseteq Ists.\frac{1}{s}$. Hence $It\subseteq Ist$. Also $Is\subseteq I$
yields that $Ist\subseteq It \subseteq Ist$. So $Ist=It$. Thus $Ists=Its$ which
means that $Is=Ists=Its$. However $Its=I\frac{1}{s}sts=I\frac{1}{s}s=I$. So
$Is=I$. Thus $I=Is\subseteq J \subseteq I$ and hence $I=J$, and since $J$ is
a finitely generated (resp., principal) right ideal of $R$, so is $I$.

4) This proof is an inspiration from \cite[proposition 3.11 part (i)]{atia}.
First, we claim that each ideal of $RS^{-1}$ is extended. Let a right ideal $J$ of ring of fraction $RS^{-1}$
and $\frac{x}{s}=b\in J$. So $\frac{x}{1}=\frac{x}{s}.\frac{s}{1}\in J.\frac{s}{1}\subseteq J$.
So $\frac{x}{1}\in J$. Hence, $\varphi^{-1}(\frac{x}{1})\in \varphi^{-1}(J)$
which means that $x\in \varphi^{-1}(J)$. Thus, $\varphi(x)\in \varphi(\varphi^{-1}(J))$,
so $\frac{x}{1}\in \varphi(\varphi^{-1}(J))$. So $\frac{x}{1}.\frac{s}{s}=\frac{x.s}{s}.\frac{s}{s}=
\frac{xs}{s}\in \varphi(\varphi^{-1}(J))$. Note that $\varphi(\varphi^{-1}(J))$ is an ideal of $RS^{-1}$
and $s\in U(RS^{-1})$, so we have
\begin{align*}
\frac{xs}{s}.\frac{1}{s}=\frac{x}{s}\in \varphi(\varphi^{-1}(J))\frac{1}{s}\subseteq \varphi(\varphi^{-1}(J)).
\end{align*}
So $b=\frac{x}{s}\in \varphi(\varphi^{-1}(J))$ which implies $J\subseteq \varphi(\varphi^{-1}(J))$.
On the other hand, $\varphi(\varphi^{-1}(J))\subseteq J$ holds for each ideal of $RS^{-1}$. Thus
$J=\varphi(\varphi^{-1}(J))$ and $J$ is an extended ideal of $RS^{-1}$.

Let a right ideal $K$ of ring of fraction $RS^{-1}$. Since $R$ is right $S$-Noetherian there exists $s\in S$
and a finitely generated (resp., principal) right ideal $W$ of $R$ such that $\varphi^{-1}(K)s\subseteq W\subseteq \varphi^{-1}(K)$.
So $\varphi(\varphi^{-1}(K)s)\subseteq \varphi(W)\subseteq \varphi(\varphi^{-1}(K))$.
We know that $\varphi(\varphi^{-1}(K)s)=\varphi(\varphi^{-1}(K))\varphi(s)$.
Also, $\varphi(s)\in U(RS^{-1})$ and $\varphi(\varphi^{-1}(K))=K$. So $K\subseteq \varphi(W) \subseteq K$.
So $K=\varphi(W)$. Since $W$ is finitely generated,  $\varphi(W)$ is finitely generated.
So $K$ is finitely generated which means that $RS^{-1}$ is right Noetherian.

5) The proof is straightforward by \cite[Proposition 2]{anderson}.
\end{proof}

Now we generalize  a theorem of D.D. Anderson and Tiberiu Dumitrescu \cite[Proposition 9]{anderson}, for commutative polynomial ring $R[x]$, in a more general setting.
We show that  if $R$ is a right (or left) $S$-Noetherian ring with  an  automorphism $\sigma$, then $R[x;\sigma]$ is a right (or left) $S$-Noetherian ring.

In \cite{andersonI} the authors defined the notion of anti-Archimedean multiplication set. Now  we introduce the notion of $\sigma$-anti-Archimedean multiplication set:

\begin{definition}
Let $R$ be a ring with   an automorphism $\sigma$ and  $S$
a multiplicative set. Then $R$ is called left $\sigma$-\emph{anti}-\emph{Archimedean} over
$S$, if there exists $s\in S$, such that
\begin{align*}
(\bigcap_{l\geq 1,k_i\ge 0} R\sigma^{k_1}(s)\sigma^{k_2}(s)\cdots \sigma^{k_l}(s)\big)\cap S \neq \emptyset.
\end{align*}
\end{definition}

\begin{theorem}\label{S-Noetherian}
Let $R$ be a ring with an automorphism $\sigma$ and $S\subseteq R$  a $\sigma$-anti-Archimedean multiplicative set. Then $R$ is right \emph{(}or left\emph{)} $S$-Noetherian  if and only if $R[x;\sigma]$ is  right \emph{(}or left\emph{)} $S$-Noetherian.
\end{theorem}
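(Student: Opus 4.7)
The plan is to treat the two implications separately: the $\sigma$-anti-Archimedean hypothesis enters only in the forward direction, while the converse is a routine extension--contraction argument.

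For the easier direction ($R[x;\sigma]$ right $S$-Noetherian $\Rightarrow$ $R$ right $S$-Noetherian), given a right ideal $I$ of $R$ I would lift it to $I[x;\sigma]:=\{\sum_i a_ix^i:a_i\in I\}$, which is a right ideal of $R[x;\sigma]$ since $(ax^i)r=a\sigma^i(r)x^i$ with $\sigma^i(r)\in R$ and $I$ a right ideal of $R$. The hypothesis supplies $s\in S$ and a finitely generated $J=\sum_{i=1}^{m}f_iR[x;\sigma]$ with $I[x;\sigma]\,s\subseteq J\subseteq I[x;\sigma]$; letting $B\subseteq R$ be the (finitely generated) right ideal spanned by all coefficients of the $f_i$, we have $B\subseteq I$, and reading off the degree-$0$ component of $as=\sum_i f_ig_i$ (for any $a\in I$) gives $as=\sum_i (f_i)_0(g_i)_0\in B$. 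Hence $Is\subseteq B\subseteq I$, so $R$ is right $S$-Noetherian.

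For the forward direction I would mimic the commutative proof of Anderson--Dumitrescu \cite[Proposition~9]{anderson}. Fix a right ideal $I$ of $R[x;\sigma]$ and, for each $n\ge 0$, set
\[A_n:=\{0\}\cup\{a\in R:\,a\text{ is the }x^n\text{-coefficient of some }f\in I\text{ of degree }\le n\}.\]
Each $A_n$ is a right ideal of $R$ (using that $\sigma^n$ is onto), $A_n\subseteq A_{n+1}$ (right-multiply by $x$), and $A:=\bigcup_n A_n$ is a right ideal. Applying $S$-Noetherianity to $A$ gives $s\in S$ and $a_1,\dots,a_k\in A$ with $As\subseteq\sum_j a_jR\subseteq A$; writing each $a_j$ as the $x^{n_j}$-coefficient of some $f_j\in I$ and replacing $f_j$ by $f_jx^{N-n_j}$ with $N=\max_j n_j$, I may assume every $f_j$ has degree $\le N$ with leading coefficient $a_j$. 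For each $n<N$, $S$-finiteness of $A_n$ supplies $s_n\in S$, elements $a_{n,j}\in A_n$, and polynomials $f_{n,j}\in I$ of degree $\le n$ with leading coefficient $a_{n,j}$. Let $F\subseteq I$ be the right ideal of $R[x;\sigma]$ generated by the finite family $\{f_j\}\cup\{f_{n,j}\}_{n<N,\,j}$.

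The central step is then to produce $t\in S$ with $It\subseteq F$. For $f\in I$ of degree $n$ with leading coefficient $a$, the leading coefficient of $ft$ equals $a\sigma^n(t)$; provided $\sigma^n(t)\in sR$ for $n\ge N$ and $\sigma^n(t)\in s_nR$ for $n<N$, the element $a\sigma^n(t)$ lies in the right ideal of $R$ generated by the leading coefficients of the appropriate generators of $F$, and a standard leading-term cancellation writes $ft=g+h$ with $g\in F$ and $h\in I$ of strictly smaller degree. Induction on $\deg f$ then yields $It\subseteq F$, so $I$ is $S$-finite. The main obstacle is producing such a $t$: the $\sigma$-anti-Archimedean hypothesis hands us an element $t\in S$ in $\bigcap_{\ell,\,k_i}R\,\sigma^{k_1}(s^*)\cdots\sigma^{k_\ell}(s^*)$ for some anti-Archimedean $s^*\in S$, and one must weave the finite list $s,s_0,\dots,s_{N-1}$ together with the infinite family of twists $\sigma^n$ into a single $\sigma$-conjugate product of $s^*$ so that, after each shift $\sigma^n$, the resulting $t$ is left-divisible by the appropriate $s_n$ or $s$. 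The left $S$-Noetherian case is entirely symmetric, using the left version of the $\sigma$-anti-Archimedean condition.
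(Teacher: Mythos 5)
Your backward direction is fine, and in fact cleaner than the paper's: the paper contracts along the set of polynomials whose \emph{leading} coefficient lies in $I$ and reads off leading terms, whereas your ideal $I[x;\sigma]$ of polynomials with \emph{all} coefficients in $I$, together with extraction of the degree-$0$ component (which is untwisted since $\omega$ acts trivially there), avoids the cancellation issues that the leading-term version has to gloss over.

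The forward direction, however, has a genuine gap, and it sits exactly at what you yourself call ``the central step.'' First, even granting an element $t\in S$ with $\sigma^n(t)\in s_nR$ for every $n$ (with $s_n=s$ for $n\ge N$), your induction does not close: from $ft=g+h$ with $g\in F$ and $h\in I$ of smaller degree, the inductive hypothesis gives $ht\in F$, not $h\in F$, so what you actually obtain is $It^{k}\subseteq F$ with $k$ growing with $\deg f$. Each degree-reduction step consumes one (twisted) factor of $s$, so an element of degree $z$ needs its multiplier to be divisible by a product of roughly $z-N$ twisted copies of $s$; a single copy of $s$ after each shift $\sigma^n$, which is all your condition on $t$ provides, is not enough. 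This is precisely why the $\sigma$-anti-Archimedean hypothesis is formulated in terms of \emph{arbitrarily long} products $\sigma^{k_1}(s)\cdots\sigma^{k_l}(s)$ of a single $s$, and also why weaving in the $N$ different multipliers $s_0,\dots,s_{N-1}$ is awkward: the hypothesis says nothing about mixed products, and it fixes one side of divisibility, which must match the side (right multiplication by $\sigma^{-z}(s),\dots,\sigma^{-d-1}(s)$) on which the reduction is performed. The paper arranges things so that only one $s$ and one extra $t$ occur: it applies $S$-finiteness to the single leading-coefficient ideal $J$ taken over all degrees, handles the low-degree part $T$ (polynomials in $I$ of degree $<d$) as a submodule of the finitely generated right $R$-module $R+Rx+\cdots+Rx^{d-1}$, which is $S$-finite with one multiplier $t$ by its preliminary module lemma, and then shows $h\,\sigma^{-z}(s)\sigma^{-z+1}(s)\cdots\sigma^{-d-1}(s)\,t$ lies in the fixed finitely generated ideal; the anti-Archimedean element is then what must absorb these degree-dependent products into one member of $S$. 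Since your write-up explicitly leaves that absorption (and the correct bookkeeping of sides and of the mixed family $s,s_0,\dots,s_{N-1}$) as an unresolved obstacle, the proposal does not yet prove the implication; to repair it, follow the paper's single-$s$, single-$t$ arrangement and then spell out how the element of $S\cap\bigcap_{l,k_i}R\,\sigma^{k_1}(s)\cdots\sigma^{k_l}(s)$ yields one multiplier valid for all degrees.
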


\begin{proof}
($\Rightarrow$) We prove the theorem for the right version. The proof of left version is similar. First, we claim that if $D$ is a finitely generated $R$-module and $R$ is a right $S$-Noetherian ring, then $D$ is a right $S$-Noetherian module. For this claim, assume that $D$ is a finitely generated right $R$-module. So there exists a finitely generated free right $R$-module $F$ and a surjective homomorphism $\pi:F\longrightarrow D$. We show that $D$ is a right $S$-Noetherian $R$-module. For this, let  $N:=\pi^{-1}(T)$, for a submodule $T$ of $D$. We have $N\simeq I_1\oplus I_2 \cdots \oplus I_l$, for some right ideals $I_i$ of $R$, $1\leq i \leq l$. Since $R$ is a right $S$-Noetherian ring, there exists $s_i\in S$ such that $I_is_i\subseteq J_i$
 for a finitely generated ideals $J_i$ of $R$, $1\leq i \leq l$. Now take $s':=s_1s_2\cdots  s_l \in S$,  we show that $Ns'\subseteq K$ for  a finitely generated $R$-submodule $K$ of $F$. One can see that $Ns_1=I_1s_1\oplus I_2s_1\oplus \cdots \oplus I_ls_1$. Since $I_i$ is a right ideal of $R$ so we have $I_is_1\subseteq I_i$ for $i\neq 1$ and $I_1s_1\subseteq J_1$, for a finitely generated right ideal $J_1$ of $R$. So we have $Ns_1\subseteq J_1\oplus I_2\oplus I_3 \cdots \oplus I_l$. Continuing in this way,
 $Ns_1s_2\cdots s_l\subseteq J_1 \oplus J_2\oplus \cdots \oplus J_l\simeq K$,
 where $J_i$ is a finitely generated right ideal of $R$,$1\leq i \leq l$, and hence $K$ is a finitely generated $R$-submodule of $F$. Thus $Ns'\subseteq K$ and hence $F$ is a right $S$-Noetherian $R$-module. Next, since $T=\pi(N)$ and $Ns'\subseteq K$, we have $\pi(Ns')=\pi(N)s'=Ts'\subseteq \pi(K)$.
  We know that $K$ is finitely generated in $F$, so $\pi(K)$ is finitely generated $R$-submodule of $D$. Thus, $Ts'\subseteq \pi(K)$ which means that $T$ is $S$-finite. Since $T$ is an arbitrary $R$-submodule of $D$,  $D$ is a right $S$-Noetherian module.

Now, we  prove that $A:=R[x;\sigma]$ is a right $S$-Noetherian ring. Let $I$ be right ideal of $A$ and suppose that
\begin{align*}
J=\{r_i\in R | r_i\,\, \textrm{is a leading coefficient of any polynomial in} \,\, I\}\cup \{0\}.
\end{align*}
It is easy to see that $J$ is a right ideal. Since $R$ is right $S$-Noetherian, $Js\subseteq (a_1R+a_2R+\cdots+a_nR)$ for some $s\in S$ and $a_i \in J$. So there exist polynomials $f_i\in I$ with $f_i=a_{i,n_i}x^{n_i}+\cdots +a_{0,i}$. Let $d=max\{n_i\}$.
Assume that $T$ is the set of all polynomials in $I$ with degree less than $d$. Obviously, $T$ is a finitely generated right $R$-submodule of $A$. So by the first claim, $T$ is right $S$-Noetherian. Hence there exist $t \in S$, $g_i \in T$ for $1\le i \le m$ such that
$Tt\subseteq (g_1R+g_2R+\cdots +g_mR)$. Let $h(x)=\sum_{i=1}^zb_ix^i \in I$, so $b_z\in J$ which means that
$b_z\in (a_1R+a_2R+\cdots+a_nR)$. Thus $h\sigma^{-z}(s)$ can be written as follows:
\begin{align*}
h\sigma^{-z}(s)= v^{(1)}+w^{(1)}+q^{(1)},
\end{align*}
where $v^{(1)}\in (f_1A+f_2A+\cdots+f_nA)$, $w^{(1)}\in \{f\in A | d+1\le deg(f)\le z-1\}$ and $q^{(1)}\in T$. Continuing in this way and multiplying $\sigma^{-z+1}(s),\sigma^{-z+2}(s),\cdots,\sigma^{-1-d}(s)$  from right side  respectively, so there exists some $v\in (f_1A+f_2A+\cdots+f_nA)$,
$w\in T$ such that
\begin{align*}
h\sigma^{-z}(s)\sigma^{-z+1}(s)\cdots \sigma^{-d-1}(s)=v+w.
\end{align*}
Assume that $s_i=\sigma^{-z+i}$ and multipling $t$ from right side, then $ hs_1s_2\cdots s_{z-d}t =vt+wt$. But $wt\in Tt$, so
$wt\in (g_1R+g_2R+ \cdots+g_mR) \subseteq (g_1A+g_2A+\cdots+g_mA)$. Hence,
\begin{align*}
hs_1s_2\cdots s_{z-d}t\in (f_1A+f_2A+\cdots +f_nA+g_1A+\cdots +g_mA).
\end{align*}
Since $s_i$'s and $t$ are independent from the choice of $h\in I$, we have
\begin{align*}
Is_1s_2\cdots s_{z-d}t\subseteq (f_1A+f_2A+\cdots+ f_nA+g_1A+\cdots +g_mA).
\end{align*}
Finally, since $s_1s_2\cdots s_{z-d}t \in S$, the ideal $I$ is $S$-finite and because $I$ was chosen an arbitrary right ideal of $A$, hence $A$ is a right $S$-Noetherian ring.

($\Leftarrow$)
Let $I$ be a right ideal of $ R$. Suppose that
\begin{align*}
J=\{f\in A | \,\, \textrm{the leading coefficient of}\, f\, \textrm{is in}\, I\}.
\end{align*}
Then $J$ is a right ideal of $A$. Since $A$ is right $S$-Noetherian,
there exists $s\in S$ such that $Js\subseteq K\subseteq J$,
where $K$ is a finitely generated right ideal of $A$. Suppose that $K=(f_1A+f_2A+\cdots +f_lA)$.
Let $r\in I$, then there exists some $f\in J$ such that $fs=\sum a_if_i$.
So if $r_i$ is the leading coefficient of $f_i$, $1\leq i\leq l$, then
$rs\in (r_1R+r_2R\cdots +r_lR)$. So $Is\subseteq (r_1R+r_2R+ \cdots +r_lR)$.
Also, $K\subseteq J$, so each leading coefficient of $K$ is a leading coefficient of $J$.
So $(r_1R+r_2R+ \cdots +r_lR) \subseteq I$ and hence $I$ is right $S$-finite and $R$
is right $S$-Noetherian.
\end{proof}
We have the following generalization of a theorem of D.D. Anderson and Tiberiu Dumitrescu \cite[Proposition 9]{anderson}.

\begin{corollary}
Let $R$ be a \emph{(}not necessarily commutative\emph{)} ring and $S\subseteq R$ an anti-Archimedean multiplicative set.
If $R$ is $S$-Noetherian then so is the polynomial ring $R[X_1,X_2,\cdots,X_n]$.
\end{corollary}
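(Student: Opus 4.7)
The plan is to apply Theorem~\ref{S-Noetherian} with the identity automorphism $\sigma=\mathrm{id}$ and then iterate. Since $R[x;\mathrm{id}]$ coincides with the ordinary polynomial ring $R[x]$, one adjoins indeterminates one at a time, using induction on $n$.

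For the base case $n=1$, I first observe that with $\sigma=\mathrm{id}_R$ the notion of $\sigma$-anti-Archimedean collapses to the classical anti-Archimedean notion. Indeed, $\sigma^{k_i}(s)=s$ for every $k_i\ge 0$, so each product $\sigma^{k_1}(s)\sigma^{k_2}(s)\cdots\sigma^{k_l}(s)$ equals $s^{k_1+\cdots+k_l}$, and as $(l,k_1,\dots,k_l)$ ranges over all allowed tuples the exponents range over all non-negative integers. Hence the condition
\[
\Big(\bigcap_{l\ge 1,\,k_i\ge 0} R\,\sigma^{k_1}(s)\cdots\sigma^{k_l}(s)\Big)\cap S\neq\emptyset
\]
becomes $\big(\bigcap_{m\ge 0}Rs^m\big)\cap S\neq\emptyset$, which is precisely the hypothesis that $S$ is anti-Archimedean. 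Theorem~\ref{S-Noetherian} therefore gives that $R[X_1]=R[X_1;\mathrm{id}]$ is $S$-Noetherian (on whichever side $R$ is).

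For the inductive step, suppose $R_k:=R[X_1,\dots,X_k]$ is $S$-Noetherian and view $S$ as a multiplicative subset of $R_k$ via the natural inclusion $R\hookrightarrow R_k$. The key point I need is that the anti-Archimedean property persists in $R_k$: if $t\in S$ witnesses $t\in Rs^m$ for all $m\ge 0$, then the containment $Rs^m\subseteq R_k s^m$ immediately gives $t\in\bigcap_{m\ge 0} R_k s^m\cap S$, so $S$ is again $\mathrm{id}_{R_k}$-anti-Archimedean in $R_k$. A second application of Theorem~\ref{S-Noetherian} to the ring $R_k$ with $\sigma=\mathrm{id}$ then yields that $R_k[X_{k+1};\mathrm{id}]=R[X_1,\dots,X_{k+1}]$ is $S$-Noetherian, completing the induction.

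There is no real obstacle here: the substantive work is already packaged inside Theorem~\ref{S-Noetherian}, and the only things to verify are the two compatibility observations above, namely (i) that $\sigma$-anti-Archimedean specializes correctly when $\sigma=\mathrm{id}$, and (ii) that the anti-Archimedean property of $S\subseteq R$ transports to each polynomial overring $R[X_1,\dots,X_k]$.
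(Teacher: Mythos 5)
Your proposal is correct and takes essentially the route the paper intends: the corollary is stated there without a separate proof precisely because it is the specialization $\sigma=\mathrm{id}_R$ of Theorem~\ref{S-Noetherian}, applied one variable at a time by induction, together with your two compatibility checks that $\mathrm{id}$-anti-Archimedean is the classical notion and that the anti-Archimedean property of $S$ passes from $R$ to each overring $R[X_1,\dots,X_k]$ (the same transport the paper itself invokes in the proof of Theorem~\ref{power}, where it notes that ``$S$ is $\sigma$-anti-Archimedean in every ring containing $R$ as a subring''). One trivial slip: with $\sigma=\mathrm{id}$ the product $\sigma^{k_1}(s)\cdots\sigma^{k_l}(s)$ equals $s^{l}$ rather than $s^{k_1+\cdots+k_l}$, but since $l$ still ranges over all positive integers the resulting intersection is unchanged, so your reduction to the classical anti-Archimedean condition stands.
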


\section{Noetherian Skew Generalized Power Series Rings}
Throughout this section, $(M,\leq)$ is assumed to be a strictly ordered commutative monoid.
 The pair $(M,\le)$ is called an \textit{ordered monoid} with
order $\le$, if for every $m,m', n\in M$, $m\le m'$ implies that
$nm\le nm'$ and $mn\le m'n$. Also, an ordered monoid $(M,\le)$ is
said to be \textit{strictly orderd} if for every $m,m',n\in M$, $m<m'$
implies that $nm<nm'$ and $mn<m'n$.
Let $(M,\le)$ be a partially ordered set. The set $(M,\le)$ is called \textit{Artinian}
if every strictly decreasing sequence of elements of $M$ stablized, and
also $(M,\le)$ is called \textit{narrow} if the number of
incomparable elements in every subset of $M$  is finite. Thus, we can conclude that $(M, \leq )$ is
Artinian and narrow if and only if every nonempty subset of $M$ has
at least one but only a finite number of minimal elements.

The author in \cite{Ribenboim-semisimple} introduced the ring of
generalized power series $R[[M]]$ for a strictly ordered monoid $M$
and a ring $R$ consisting of all functions from $M$ to $R$ whose support
is Artinian and narrow with the pointwise addition and the
convolution multiplication. There are a lot of interesting
examples of rings in this form (e.g., Elliott and Ribenboim,
\cite{Elliott-Ribenboim 1992}; Ribenboim,\cite{Ribenboim (1995b)}) and it was extensively studied by
many authors, recently.

In \cite{zim}, the authors defined a  ``twisted'' version of the mentioned
construction and study on ascending chain condition for its principal ideals.
Now we recall the construction of the skew generalized power series
ring introduced in \cite{zim}. Let $R$ be a ring, $(M, \leq )$ a
strictly ordered monoid, and $\omega: M\rightarrow End(R)$ a monoid
homomorphism. For $m \in M$, let $\omega_{m}$ denote the image of
$m$ under $\omega$, that is $\omega_{m}=\omega (m)$. Let $A$ be the
set of all functions $f: M\rightarrow R$ such that the support
$\supp(f)= \{ m\in  M | f(m)\neq 0 \}$ is Artinian and narrow. Then
for any $m \in M$ and $f,g \in A $ the set
\begin{align*}
\chi_{m}( f, g)=\{(u,v) \in \supp(f) \times \supp(g):  m=uv\}
\end{align*}
is finite. Thus one can define the product $f g: M\rightarrow R$ of
$f, g \in A$ as follows:
\begin{align*}
fg(m)=\sum_{ (u,v)\in \chi_{m}(f,g)}f(u)\omega_{u} (g(v)),
\end{align*}
(by convention, a sum over the empty set is $0$). Now, the set $A$
 with pointwise addition and the defined multiplication is a ring, and
called \textit{the ring of skew generalized power series} with
coefficients in $R$ and exponents in $M$. To simplify, take $A$ as a formal series $\sum\limits_{m\in M}r_mx^m,$
where $r_m = f(m)\in R$. This ring can be denoted either by
$R[[M^{\leq},\omega]]$ or by $R[[M,\omega ]]$ (see \cite{unified}
and \cite{von}).\par

For every $r\in R$ and $m\in M$ we can defined the maps $c_r,e_m:M\longrightarrow R$ by
\begin{align}\label{e_m}
c_r(x)=\begin{cases}
r\quad; x=1 \\
0 \quad ; \text{Otherwise}
\end{cases},
e_m(x)=\begin{cases}
1\quad; x=m \\
0 \quad  ; \text{Otherwise}
\end{cases}
\end{align}
where $x\in M$.
By way of illustration, $c_r(x)$ and $e_m(x)$ are like $r$ and $x^m$ in usual polynomial ring $R[x]$, respectively.\\

The following proposition which is proved in \cite[Theorem 2.1]{higman}, can characterize all Artinian and narrow sets.
\begin{proposition}
Let $(M,\leq)$ be an ordered set. Then the following conditions are
equivalent

\emph{(1)} $(M,\leq)$ is Artinian and narrow.

\emph{(2)} For any sequence $(m_n)_{n\in \mathbb{N}}$ of elements of $M$ there exist indices $n_1<n_2<n_3<\cdots$ such that $m_{n_1}\leq m_{n_2}\leq m_{n_3}\leq \cdots$ .

\emph{(3)} For any sequence $(m_n)_{n\in N}$ of elements of $M$ there exist indices $i < j$ such that $m_i\leq m_j$.
\end{proposition}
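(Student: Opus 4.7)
The plan is to prove $(1) \Rightarrow (2) \Rightarrow (3) \Rightarrow (1)$, with the first implication carrying essentially all the content.

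For $(2) \Rightarrow (3)$ I would simply take $i := n_1$ and $j := n_2$ from the ascending subsequence supplied by (2), so that $m_i \leq m_j$ is immediate. For $(3) \Rightarrow (1)$ I would argue by contrapositive. If $(M, \leq)$ is not Artinian, there is a strictly decreasing sequence $m_1 > m_2 > \cdots$; for any $i < j$, antisymmetry forces $m_i \not\leq m_j$, contradicting (3). If $(M, \leq)$ is not narrow, there is an infinite antichain which, listed as a sequence $(m_n)$, also satisfies $m_i \not\leq m_j$ for all $i < j$, again contradicting (3).

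For the substantive implication $(1) \Rightarrow (2)$, I would apply the infinite Ramsey theorem. Given a sequence $(m_n)_{n \in \mathbb{N}}$, I color each pair $\{i,j\}$ with $i < j$ in one of three colors: \emph{red} if $m_i \leq m_j$, \emph{blue} if $m_j < m_i$, and \emph{green} if $m_i$ and $m_j$ are incomparable. Ramsey's theorem supplies an infinite monochromatic set of indices $n_1 < n_2 < \cdots$. A blue subsequence produces $m_{n_1} > m_{n_2} > \cdots$, contradicting the Artinian hypothesis; a green subsequence produces an infinite antichain, contradicting narrowness. Hence the color must be red, giving $m_{n_1} \leq m_{n_2} \leq \cdots$ as required.

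The only real obstacle is $(1) \Rightarrow (2)$, and even there the work is just the observation that the two ``bad'' monochromatic colors are precisely ruled out by the two halves of hypothesis (1). A self-contained alternative to Ramsey is the classical peak-element dichotomy: either cofinitely many indices admit a later term dominating them (giving the ascending subsequence directly), or infinitely many do not, producing a subsequence with no later term dominating an earlier one, to which a second two-coloring dichotomy reduces to the Artinian or narrow contradiction. Either route works, but Ramsey gives the cleaner presentation and so I would use it.
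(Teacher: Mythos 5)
Your proof is correct, but note that the paper does not actually prove this proposition at all: it is quoted as a known result, attributed to Higman \cite[Theorem 2.1]{higman}, so there is no in-paper argument to compare against. What you have supplied is a complete, self-contained proof of the cited fact, and your route is the standard one in the well-quasi-order literature: the cheap implications $(2)\Rightarrow(3)$ and $(3)\Rightarrow(1)$ (via a strictly descending chain or an infinite antichain witnessing the failure of (1)), plus the genuinely nontrivial $(1)\Rightarrow(2)$ handled by the infinite Ramsey theorem for pairs with the three colors $m_i\leq m_j$, $m_j<m_i$, and incomparable. Your coloring is well defined and exhaustive (antisymmetry separates the first two colors), a blue monochromatic set gives an infinite strictly descending subsequence contradicting the Artinian hypothesis, a green one gives an infinite antichain contradicting narrowness (in both cases the terms are automatically pairwise distinct), and red gives the desired ascending subsequence; so the argument is sound. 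The trade-off is the expected one: the paper's citation keeps the exposition short and defers to Higman's classical treatment (whose original proof proceeds differently, via finite-basis-type arguments), whereas your Ramsey argument makes the equivalence self-contained at the cost of invoking Ramsey's theorem; your sketched ``peak element'' alternative would avoid Ramsey but is only outlined, so the Ramsey version is the one that stands as a complete proof.
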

The author in \cite{brook} introduced the concept of a lower set. A \emph{lower set} of $L$ is a subset $I\subseteq L$ such that $x\leq y \in I$ implies $x \in I$ for all $x, y \in L$, (which we denoted by $\Downarrow L$ for the set of lower sets of $L$ ordered by inclusion). In this concept, we can ignore the condition narrow by lower set, indeed it is proved that if $L$ is a partially ordered set, then $\Downarrow L$ is Artinian if and only if $L$ is Artinian and narrow.  He also showed that if $\alpha : K \longrightarrow L$ is strictly increasing map between partially ordered sets, then if $L$ satisfies Artinian (or Noetherian) property, then so is $K$. Moreover, if $\alpha$ is surjective and $\Downarrow K$ satisfies Artinian (or Noetherian) property, then so does $\Downarrow L$.

An ordered monoid $(M,\leq)$ is called \emph{positively ordered} if $m\ge 0$ for all $m\in M$. In this condition,
$m\preceq m'$ implies $m \leq m'$ for all $m, m' \in M$. Now, according to \cite[in section 4]{brook} we have
\begin{align}
R[[M,\omega,\leq]]= \{f\in R[[M,\omega]] \quad|  \Downarrow (\supp(f),\leq)\quad is \quad \text{Artinian}\}.
\end{align}
If $\Downarrow (M,\leq)$ is Artinian, $R[[M,\omega,\leq]]=R[[M,\omega]]$.
For instance, $\Downarrow (\mathbb{F},\preccurlyeq)$
 and $\Downarrow (\mathbb{F}^n,\preccurlyeq)$ are
Artinian, and so $R[[\mathbb{F},\omega,\preccurlyeq]]=R[[\mathbb{F},\omega]]$
and $R[[\mathbb{F}^n,\omega,\preccurlyeq]]=R[[\mathbb{F}^n,\omega]]$
such that $\mathbb{F}$ be a free monoid.\\

Now we give a generalization of a result  \cite[Theorem 4.3]{brook} of G. Brookfield:
\begin{theorem}\label{go}
Let $R$ be a ring, $(M,\leq)$ a positive strictly ordered monoid and $\omega_{m}$  an automorphism of $R$  with $\omega_m\omega_n=\omega_n\omega_m$ for each $m,n\in M$.
Then $R[[M,\omega]]$ is  left Noetherian if and only if $R$ is left Noetherian and $M$ is finitely generated.
\end{theorem}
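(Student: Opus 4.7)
The plan is to prove each direction of the equivalence separately.

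For the forward direction, suppose $A := R[[M,\omega]]$ is left Noetherian. The augmentation map $\varepsilon : A \to R$ given by $\varepsilon(f) = f(1)$, where $1$ denotes the identity of $M$, is a surjective ring homomorphism: positivity forces $uv = 1$ to imply $u = v = 1$, so $(fg)(1) = f(1)g(1)$, and $r \mapsto c_r$ provides a right inverse. Hence $R \cong A/\ker(\varepsilon)$ is left Noetherian. To show $M$ is finitely generated, let $I = \ker(\varepsilon)$, a two-sided ideal which is finitely generated on the left, say by $f_1, \ldots, f_n$. Set $G := \bigcup_{i=1}^n \min(\supp(f_i))$, a finite set because each $\supp(f_i)$ is Artinian and narrow. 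The claim is $\langle G \rangle = M$: for any $m \neq 1$, write $e_m = \sum_i g_i f_i$ and read off the coefficient at $m$ to obtain some $i$ and a factorization $m = uv$ with $v \in \supp(f_i)$, $u \in \supp(g_i)$, where strict monotonicity (applied to $v > 1$) forces $u < m$. An iterative argument organized through the Artinian narrow structure of each $\supp(f_i)$ then reduces $v$ to an element of $G$ and expresses $m$ in $\langle G \rangle$.

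For the reverse direction, suppose $R$ is left Noetherian and $M = \langle m_1, \ldots, m_k \rangle$. Since the $\omega_{m_i}$ pairwise commute, one may form the iterated skew power series ring $B := R[[x_1;\omega_{m_1}]][[x_2;\omega_{m_2}]]\cdots[[x_k;\omega_{m_k}]]$. Because each $\omega_{m_i}$ is an automorphism and this property lifts inductively, repeated application of the Hilbert basis theorem for skew power series rings shows $B$ is left Noetherian. Identifying $B$ with $R[[\mathbb{N}^k, \omega']]$ with $\omega'$ induced by $\omega$, the surjective monoid homomorphism $\phi : \mathbb{N}^k \to M$ given by $\phi(a_1,\ldots,a_k) = m_1^{a_1}\cdots m_k^{a_k}$ induces a surjective ring homomorphism $\Phi : B \to A$ by $\Phi(f)(m) = \sum_{e \in \phi^{-1}(m) \cap \supp(f)} f(e)$. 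The well-definedness of $\Phi$ follows from Dickson's lemma, which renders the fibers of $\phi$ intersected with Artinian narrow subsets of $\mathbb{N}^k$ finite. Then $A \cong B/\ker(\Phi)$ inherits left Noetherianness.

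The main obstacle I expect in the forward direction is that $M$ itself need not be well-founded, so the descent on $m$ yielding $\langle G \rangle = M$ must be run through the Artinian narrow structure of each individual support $\supp(f_i)$ rather than of $M$. In the reverse direction, the principal difficulty is establishing that $\Phi$ is well-defined and compatible with the $\omega$-twisted multiplication on both sides, which requires careful analysis of the fibers of $\phi$ together with Dickson's lemma.
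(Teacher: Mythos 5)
Your reverse direction and your argument that $R$ is left Noetherian are sound: the augmentation map $\varepsilon(f)=f(1)$ is indeed a split surjection onto $R$, and realizing $R[[M,\omega]]$ as a quotient of $R[[\mathbb{N}^k,\omega']]$ (the iterated skew power series ring, Noetherian by the skew Hilbert basis theorem since the commuting $\omega_{m_i}$ lift to automorphisms) is essentially the paper's route, with the added benefit that you actually justify why the ``free'' cover is Noetherian. The genuine gap is in your proof that $M$ is finitely generated. The claim $\langle G\rangle=M$ for $G=\bigcup_i\min(\supp(f_i))$ is false. Take $M=(\mathbb{N}^2,+)$ ordered by total degree with ties broken by the second coordinate, i.e.\ $(a,b)\le(c,d)$ iff $a+b<c+d$, or $a+b=c+d$ and $b\le d$; this is a positive strictly ordered monoid in which every subset is well-ordered, so $R[[M,\omega]]$ (with trivial $\omega$) is the usual power series ring $R[[x,y]]$, left Noetherian whenever $R$ is. Here $\ker(\varepsilon)=(x,y)$ is generated as a left ideal by $f_1=x$ and $f_2=x+y$, and since $(1,0)<(0,1)$ in this order, $\min(\supp(f_1))=\min(\supp(f_2))=\{(1,0)\}$, so $G=\{(1,0)\}$ and $\langle G\rangle\neq M$. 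The underlying problem is that your ``iterative reduction of $v$ to an element of $G$'' confuses the given order $\le$ with divisibility: from $v_0\le v$ with $v_0\in\min(\supp(f_i))$ you get no factorization of $v$ (or of $m$) through $v_0$, so $\le$-minimal support elements carry no generating information about $M$, no matter how the iteration is organized through the Artinian narrow structure of the supports.

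The step can be repaired, but by a different mechanism, which is what the paper (following Brookfield) does: work with the algebraic (divisibility) order $m\preccurlyeq m'$ iff $m'=mn$, which positivity makes a partial order refined by $\le$, and apply Noetherianity not to $\ker(\varepsilon)$ but to the ascending chain of left ideals $Ae_{m_1}\subseteq Ae_{m_1}+Ae_{m_2}\subseteq\cdots$ attached to an arbitrary sequence $(m_n)$ in $M$. Stabilization forces $e_{m_l}\in Ae_{m_{1}}+\cdots+Ae_{m_{k}}$ for some $l>k$, and comparing coefficients at $m_l$ produces a factorization $m_l=u\,m_{i}$ with $i\le k$, i.e.\ $m_i\preccurlyeq m_l$; by Higman's criterion this says $(M,\preccurlyeq)$ is Artinian and narrow, and then Brookfield's Lemma 3.3 (the finitely many $\preccurlyeq$-minimal elements of $M\setminus\{1\}$ generate $M$, by induction along the Artinian order $\preccurlyeq$) yields finite generation. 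So the finite set must be extracted from $\preccurlyeq$-minimal elements of $M$ itself via the chain condition, not from the $\le$-minimal support elements of generators of the augmentation ideal; as written, your forward direction does not go through.
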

\begin{proof}
$\Leftarrow$) In the first place, we claim that if $\varphi :(N,\leq)\rightarrow (M,\leq)$ is a surjective strict monoid homomorphism, induces a surjective ring homomorphism $\varphi^*: R[[N,\omega,\leq]]\rightarrow R[[M,\omega,\leq]]$.
Since $\varphi$ is strict, $\varphi^{-1}(x)$ is antichain in $(N,\leq)$ for all $x\in M$. Thus, if
$f\in R[[N,\omega,\leq]]$ then $\varphi^{-1}(x)\cap \supp(f)$ is finite and we can define $\varphi^*(f)=f^*$, where
$f^*(x)= \sum_{x' \in \varphi^{-1}(x)} f(x')$ for $x\in M$.
We show that $\varphi^*$ is a ring homomorphism. One can see that
\begin{align}\label{A}
(fg)^*(m)=\sum_{m'\in \varphi^{-1}(m)}(fg)(m')=\sum_{xy=m}\sum_{\substack{x'y'=m'\\m'\in \varphi^{-1}(m)}}\bigg(f(x')\alpha_{x'}(g(y'))\bigg).
\end{align}
On the other hand
\begin{align*}
(f^*g^*)(m)=&\big(\varphi^*(f)\varphi^*(g)\big)(m)=\sum_{xy=m}\bigg(\varphi^*(f(x))\alpha_x\big(\varphi^*(g(y)\big)\bigg)\nonumber\\
=&\sum_{xy=m}\bigg(\sum_{x'\in \varphi^{-1}(x)}f(x')\bigg)\alpha_x\bigg(\sum_{y'\in \varphi^{-1}(y)}g(y'))\bigg)\nonumber\\
=&\sum_{xy=m}\sum_{x'\in \varphi^{-1}(x)}\sum_{y'\in \varphi^{-1}(y)}\bigg(f(x')\alpha_{x'}(g(y'))\bigg).
\end{align*}
Since $\varphi^{-1}$ is a homomorphism, $\varphi^{-1}(x)\varphi^{-1}(y)=\varphi^{-1}(xy)$ and so $\varphi^{-1}(m)=\varphi^{-1}(x)\varphi^{-1}(y)$. So
\begin{align}\label{B}
(f^*g^*)(m)=\sum_{xy=m}\sum_{\substack{m'=x'y'\\m'\in \varphi^{-1}(m)}}\bigg( f(x')\alpha_{x'}(g(y'))\bigg).
\end{align}
By equations \ref{A} and \ref{B} we see that $(fg)^*(m)=(f^*g^*)(m)$. We have also
\begin{align*}
(f+g)^*(x)=&\sum_{x'\in \varphi^{-1}(x)}(f+g)(x')=\sum_{x'\in \varphi^{-1}(x)}(f(x')+g(x'))\nonumber\\
=&\sum_{x'\in \varphi^{-1}(x)}f(x')+\sum_{x'\in \varphi^{-1}(x)}g(x')=f^*(x)+g^*(x).
\end{align*}
Thus $\varphi^*:R[[N,\omega,\leq]]\rightarrow R[[M,\omega,\leq]]$ is a ring homomorphism.
Now, we show that $\varphi^*$ is surjective. Suppose that $f\in R[[M,\omega,\leq]]$, where $\{f(n)\}_{n\in M}$ are the coefficients of $f$ in $R$. For every $n\in M$, the set  $\varphi^{-1}(n)$ is nonempty and finite, say $\varphi^{-1}(n)=\{m_1,m_2,\dots ,m_k\}$, where $k$ and all the $m_{i}$ depends on $n$. We define the function $g\in R[[N,\omega,\leq]]$ as follows
\begin{align}
g(m_j)=\begin{cases}
f(n)\quad ;   j=1 \\
0 \quad  ;   \text{otherwise}.
\end{cases}
\end{align}
Notice that $g$ is independent of $n$, since if $n \neq n'$, then $\varphi^{-1}(n) \cap \varphi^{-1}(n')=\emptyset$. Also, for each $n\in M$ we have
\begin{align*}
\varphi^{*}(g)(n)=\sum_{m\in \varphi^{-1}(n)}g(m)=\sum_{j=1}^k g(m_j)=g(m_1)=f(n).
\end{align*}
This means that $\varphi^{*}(g)=f$, and hence $\varphi^*$ is surjective. So we proved the claim.
It is well-known that there is an strict monoid surjection
 $\varphi :(\mathbb{F}^n,\preccurlyeq)\rightarrow (M,\preccurlyeq)$ for some $n\in \mathbb{N}$.
Also, the identity map $(M,\preccurlyeq)\to (M,\leq)$ is a surjection.
So the composition of these two maps is a surjection and by \cite[Lemma 2.1]{brook}. Hence $R[[M,w,\leq]]$ is a homomorphic image of the ring $R[[\mathbb{F}^n,\omega, \preceq]]$. Since $R[[\mathbb{F}^n,\omega,\preceq]]=R[[\mathbb{F}^n,\omega]]$ and $R[[\mathbb{F}^n,\omega]]$ is Noetherian, its projection $R[[M,w,\leq]]$ is also Noetherian.
Moreover, we show that $R[[M,\omega,\leq]]=R[[M,\omega]]$. If $R[[M,\omega,\leq]]$ is left Noetherian, then $\Downarrow(M,\preccurlyeq)$ is Artinian.
By applying \cite[Lemma 2.1(2)]{brook} to the identity map $(M,\preccurlyeq) \to (M,\leq)$, one can see that $\Downarrow(M, \leq)$ is Artinian.
Thus $R[[M,\omega, \leq]]=R[[M,\omega]]$.

$\Rightarrow$) The method of this part is inspired from \cite[Theorem 4.3]{brook}. The trivial case of $M$ is obvious.   By \cite[Lemmas 3.1 and 3.2]{brook}, $M$ is strict
and $\preccurlyeq$ is a partial order on $M$.

Suppose $T=R[[M,\omega,\leq]]$ is left Noetherian.
One can see that $M$ is finitely generated similar to the proof of
\cite[Theorem 4.3]{brook}. Hence we have to prove
that $R$ is Noetherian similar to the proof of (\cite[Theorem 5.2(i)]{riben},
\cite[Theorem 3.1(i)]{vara}). Let  $I_T=\{f\in T\mid \omega_x(f(y))\in I ; x,y\in M\}$.
It is easy to see that $I_T$ is a left ideal of $T$. So for each ideal $I$ of $R$,
there is a correspondent ideal in $T$. Also if $I \subset J$ ,then
$I_T\subset J_T$. Hence if there exists a nonstabilized ascending
chain in $R$, then there is one in $T$. But this is impossible, so $R$ is left Noetherian.
\end{proof}

In Theorem \ref{go} if we set $\sigma$ the identity homomorphism then we have:
\begin{corollary}
\emph{\cite[Theorem 4.3]{brook}}
Let $R$ be a ring and $(M,\le)$ a positive strictly ordered monoid.
Then $R[[M,\le]]$ is left Noetherian if and only if $R$ is left Noetherian
and $M$ is finitely generated.
\end{corollary}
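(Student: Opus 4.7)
The plan is to derive this corollary as an immediate specialization of Theorem \ref{go}. Concretely, I would take the monoid homomorphism $\omega : M \to \End(R)$ to be the constant map sending every $m \in M$ to $\mathrm{id}_R$. Under this choice, the twisted convolution
\begin{align*}
(fg)(m) = \sum_{(u,v) \in \chi_m(f,g)} f(u)\,\omega_u(g(v))
\end{align*}
collapses to the ordinary convolution product, so $R[[M,\omega]]$ coincides as a ring with $R[[M,\le]]$.

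Next I would verify that all hypotheses of Theorem \ref{go} hold in this setting: $(M,\le)$ is a positive strictly ordered monoid by assumption, each $\omega_m = \mathrm{id}_R$ is an automorphism of $R$, and the commutativity condition $\omega_m \omega_n = \omega_n \omega_m$ is automatic since both sides are equal to $\mathrm{id}_R$. With these hypotheses checked, Theorem \ref{go} gives that $R[[M,\omega]]$ is left Noetherian if and only if $R$ is left Noetherian and $M$ is finitely generated, which under the identification $R[[M,\omega]] = R[[M,\le]]$ is exactly the statement to be proved.

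There is essentially no obstacle here; the only point worth remarking on is that the displayed formula for the product in $R[[M,\omega]]$ genuinely reduces to the classical formula for $R[[M,\le]]$ when every $\omega_m$ is trivial, so the isomorphism of the two rings is a matter of unraveling definitions rather than a substantive argument. Thus the corollary follows in one line from Theorem \ref{go}.
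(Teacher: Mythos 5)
Your proposal is correct and matches the paper's own derivation: the paper obtains this corollary precisely by setting every $\omega_m$ to be the identity automorphism in Theorem \ref{go}, under which the twisted convolution reduces to the usual product and $R[[M,\omega]]$ is just $R[[M,\le]]$. Your explicit verification of the hypotheses (each $\omega_m=\mathrm{id}_R$ is an automorphism and the commutation condition is trivial) is simply a more careful spelling out of the same one-line specialization.
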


Finally, we conclude the following result which connects the results of previous sections.
\begin{corollary}
Let $R$ be an $S$-Noetherian von Neumman regular ring and  $S$ a denominator set. Assume that  $(M,\leq)$ is a finitely generated positive strictly ordered monoid and $\omega_{m}$ an automorphism of $R$ with $\omega_m\omega_n=\omega_n\omega_m$ for each $m,n\in M$. Then $(S^{-1}R)[[M,\omega]]$ is a left Noetherian ring.
\end{corollary}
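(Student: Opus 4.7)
The plan is to chain Proposition \ref{tie} with Theorem \ref{go}: first promote the $S$-Noetherian hypothesis on $R$ to ordinary Noetherianity of the classical localisation $S^{-1}R$, then apply Theorem \ref{go} to this localisation equipped with the extended automorphisms.

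First, since $R$ is von Neumann regular, $S\subseteq R$ is a denominator set, and $R$ is $S$-Noetherian, Proposition \ref{tie}(3) upgrades this to ``$R$ is Noetherian''. Feeding this back into Proposition \ref{tie}(4) (which needs only that $R$ is $S$-Noetherian with $S$ a denominator set) shows that the Ore localisation $S^{-1}R$ is Noetherian; in particular it is left Noetherian. Next, each automorphism $\omega_m$ of $R$ extends uniquely to an automorphism of $S^{-1}R$ via $\omega_m(rs^{-1}) := \omega_m(r)\omega_m(s)^{-1}$, since an automorphism of $R$ sends the denominator set $S$ into the units of $S^{-1}R$. The commutation identities $\omega_m\omega_n=\omega_n\omega_m$ transfer to the extended maps because they hold on representatives, so the skew generalised power series ring $(S^{-1}R)[[M,\omega]]$ is well-defined.

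Finally, I apply Theorem \ref{go} with $S^{-1}R$ in place of $R$. All its hypotheses are in place: $(M,\leq)$ is a finitely generated positive strictly ordered monoid, $S^{-1}R$ is left Noetherian, and the extended $\omega_m$ form a family of pairwise commuting automorphisms. The theorem then concludes that $(S^{-1}R)[[M,\omega]]$ is left Noetherian, which is the desired statement. The only point requiring care is the extension step $\omega_m\mapsto \omega_m$ on $S^{-1}R$, i.e.\ verifying that $\omega_m(S)$ lies in the group of units of $S^{-1}R$; once this routine localisation compatibility is recorded, the corollary is an immediate two-step deduction from Proposition \ref{tie} and Theorem \ref{go} with no further calculation needed.
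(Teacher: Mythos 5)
Your proposal is correct and follows essentially the same two-step route as the paper: Proposition~\ref{tie} gives that the localisation $S^{-1}R$ is Noetherian, and Theorem~\ref{go} (with $S^{-1}R$ in place of $R$, $M$ finitely generated) then yields that $(S^{-1}R)[[M,\omega]]$ is left Noetherian. The only differences are cosmetic: the detour through part~(3) of Proposition~\ref{tie} is unnecessary since part~(4) applies directly to an $S$-Noetherian ring, and your remark that the automorphisms $\omega_m$ must be checked to extend to $S^{-1}R$ is a compatibility point the paper passes over silently.
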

\begin{proof}
The ring $S^{-1}R$ is Noetherian  by Theorem \ref{tie}. Since $(M,\leq)$ is a positive strictly ordered monoid and $\omega_{m}$ is an automorphism for all $m\in M$, $(S^{-1}R)[[M,\omega]]$ is a Noetherian ring by Theorem \ref{go}.
\end{proof}

\section{S-Noetherian property of generalized skew power series rings}

Recall that a ring is called right \emph{duo} (resp., left duo) if all of its right (resp., left) ideals are two-sided.
Also, a right and left duo ring is called a duo ring.
We know that if a ring is duo, then every prime ideal is completely
prime.
It is known that a power series ring over a duo ring need not be duo (on either side).

\begin{lemma}\label{duo}
Let $R$ be a duo ring and $S\subset R$  a denominator set. If $s\in S$, $r\in R$ then there exists  $s_1\in S$ such that $srs_1=rss_1$.
\end{lemma}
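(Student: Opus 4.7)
The plan is to exploit the duo hypothesis, which gives $aR = Ra$ for every $a \in R$, to rewrite the commutator $sr - rs$ as a product to which the Ore and reversibility axioms of the denominator set $S$ can be applied.

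First I would use duo to reduce the problem to a single factorization. Since $rs \in Rs = sR$, there is an $a \in R$ with $rs = sa$. Then the target identity $srs_1 = rss_1$ reads $srs_1 = sas_1$, that is, $s(r - a)s_1 = 0$. Thus it suffices to produce $s_1 \in S$ right-annihilating $s(r - a)$; equivalently, to show that the commutator $sr - rs = s(r - a)$ is right-annihilated by some element of $S$.

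Next I would invoke the right Ore property of $S$ on the pair $(r - a, s)$, obtaining $s_1 \in S$ and $b \in R$ with $(r - a)s_1 = sb$, so that $s(r - a)s_1 = s^2 b$. Applying the duo identity in the form $sb = b's$ and, dually, writing $sr = a's$ for the other side of the original product, one rearranges $s^2 b$ into an expression involving $s$ twice on the right; the right reversibility of $S$ then allows one to refine $s_1$ to an element of $S$ that actually collapses the residual factor, so that the final product is $0$ in $R$.

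The main obstacle is bookkeeping: the left-Ore output $(r - a)s_1 = sb$ puts $s$ and $s_1$ on opposite sides, whereas the goal is to land both copies of $s$ symmetrically so that the duo-swap and reversibility combine to cancel. Concretely, one must check that after the duo substitutions the remaining element genuinely lies in the left annihilator of some $s' \in S$, so that right reversibility transfers the annihilation to the right side of $s(r - a)$ rather than to an intermediate rearrangement. A conceptually cleaner route, which I would try in parallel, is to pass to $RS^{-1}$: the duo property of $R$ forces conjugation by any $s \in S$ to preserve the image of $R$, and equating the two expressions for this conjugation produces a witness $s_1 \in S$ in $R$ of the required identity.
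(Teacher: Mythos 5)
Your opening reduction is fine: writing $rs=sa$ via the duo property and restating the goal as finding $s_1\in S$ with $s(r-a)s_1=0$ is a correct reformulation, since $s(r-a)=sr-rs$. But from that point on the plan never actually produces an annihilation, and that is the entire content of the lemma. The right Ore condition applied to the pair $(r-a,s)$ only gives $(r-a)s_1=sb$, hence $s(r-a)s_1=s^{2}b$, an element with no reason whatsoever to vanish; and right reversibility cannot help here, because it only \emph{transfers} an annihilation you already have (from $s'a=0$ to $as''=0$ for some $s''\in S$) --- it cannot create one. So the sentence ``the right reversibility of $S$ then allows one to refine $s_1$ \dots so that the final product is $0$'' is precisely the missing step, not a consequence of the preceding duo/Ore bookkeeping. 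To see that no amount of such formal manipulation can close this gap, note that in a noncommutative division ring $D$ with $S=D\setminus\{0\}$ every ingredient you invoke (duo, Ore, reversibility) is available and trivial, yet $(sr-rs)s_1=0$ with $s_1\neq 0$ would force $sr=rs$; so the conclusion cannot follow from those axioms alone in the way you describe --- some specific computation identifying an element that is genuinely torsion with respect to $S$ is indispensable.

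This is exactly where the paper's argument differs. It writes $sr=rs'$ (duo applied on the other side), passes to the right ring of fractions, and derives the identity $\frac{r}{1}=\frac{r}{1}\cdot\frac{s'}{s}$, hence $\frac{r(s-s')}{s}=0$ in $RS^{-1}$; it then uses the kernel description of the localization map (a fraction is zero iff its numerator is right-annihilated by some element of $S$) to extract $s_1$ with $r(s-s')s_1=0$, which rearranges to $srs_1=rss_1$. In other words, the lemma is equivalent to the commutation $\varphi(s)\varphi(r)=\varphi(r)\varphi(s)$ in $RS^{-1}$, and any proof must establish that identity. Your fallback suggestion of passing to $RS^{-1}$ points in the right direction, but the observation that conjugation by $\varphi(s)$ preserves $\varphi(R)$ does not yield this commutation, so the alternative route has the same hole as the main one.
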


\begin{proof}
Let $s\in S$ and $r\in R$. Since $R$ is duo, there exist $s'\in S$
such that $sr=rs'$, so
$\frac{1}{s}.\frac{sr}{1}=\frac{1}{s}.\frac{rs'}{1}$. Hence
$\frac{r}{1}=\frac{rs'}{s}=\frac{r}{1}.\frac{s'}{s}$.
Thus $\frac{r}{1}(1-\frac{s'}{s})=0$, which means that
$\frac{r(s-s')}{s}=0_{S^{-1}R}$. So $r(s-s')s_1=0_{R}$.
So $rss_1=rs's_1$ and since $rs'=sr$ we have $srs_1=rss_1$.
\end{proof}

In the previous result, it is easy to see that if $s\in S$, $r\in R$, then there exists  $s_1\in S$ such that $s_1sr=s_1rs$.
We will use this point in the  proposition below.

\begin{proposition}\label{prime ideal}
Let $R$ be a duo ring, $S\subseteq R$ a denominator set and
$M$ an $S$-finite $R$-module. Then $M$ is $S$-Noetherian if
and only if  $PM$ is an $S$-finite submodule, for
each $S$-disjoint prime ideal $P$ of $R$.
\end{proposition}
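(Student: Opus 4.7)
The forward implication is immediate: $PM$ is a submodule of $M$, so if $M$ is $S$-Noetherian then $PM$ is $S$-finite. For the converse I would argue by contradiction via Zorn's Lemma. Assume $M$ is not $S$-Noetherian, and let $\mathcal{F}$ be the non-empty family of submodules of $M$ that fail to be $S$-finite. A chain in $\mathcal{F}$ has union again in $\mathcal{F}$: a finitely generated submodule witnessing $S$-finiteness of the union would sit inside a single member of the chain, forcing that member to be $S$-finite. So Zorn produces a maximal element $N_0 \in \mathcal{F}$. Set $P := (N_0 : M) = \{r \in R : Mr \subseteq N_0\}$, which is readily checked to be a two-sided ideal of $R$.

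The technical core of the argument is the claim that $(N_0 : m)_R := \{r \in R : mr \in N_0\}$ equals $P$ for every $m \in M \setminus N_0$. The inclusion $P \subseteq (N_0:m)_R$ is clear. For the reverse, if some $r \in (N_0:m)_R$ were not in $P$, then both $K := \{x \in M : xr \in N_0\}$ and $N_0 + Mr$ would lie strictly between $N_0$ and $M$, so by maximality each is $S$-finite. Writing the generators of the $S$-finite witness for $N_0 + Mr$ as $n_i + y_i r$ with $n_i \in N_0$ and expanding a typical element $n s_2 \in N_0 s_2$ through them, I would use the duo relation ($rR = Rr$) to collect all $r$-terms in the form $zr$ with $z \in M$; since $ns_2 \in N_0$ this forces $z \in K$. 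Applying $S$-finiteness of $K$ and Lemma \ref{duo} (to commute $r$ past an element of $S$ up to a further $S$-adjustment) should then absorb $N_0 \cdot (s_2 s_1 s_3)$ into a finitely generated submodule of $N_0$, contradicting $N_0 \in \mathcal{F}$ and hence forcing $r \in P$.

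The claim quickly gives that $P$ is prime: if $ab \in P$ and $b \notin P$, then $(ma)b \in N_0$ for every $m \in M$; if some $ma \notin N_0$, the claim applied to $ma$ would give $b \in P$, contradiction, so $Ma \subseteq N_0$, i.e.\ $a \in P$. Similarly $P \cap S = \emptyset$: an element $s \in P \cap S$ combined with the $S$-finiteness of $M$ ($Ms_0 \subseteq G$ finitely generated) would yield $N_0 \cdot (s_0 s) \subseteq Gs \subseteq N_0$ with $Gs$ finitely generated, again contradicting $N_0 \in \mathcal{F}$. The hypothesis now applies: $PM$ is $S$-finite, say $PM \cdot s'' \subseteq F'' \subseteq PM$ with $F''$ finitely generated. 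Fix any $m_0 \in M \setminus N_0$; by maximality, $N_0 + m_0 R$ is $S$-finite with a finite generating set of the form $\{n_i + m_0 r_i\}$. Decomposing $ns \in (N_0 + m_0 R)s$ as $ns = \sum n_i t_i + m_0(\sum r_i t_i)$ and applying the claim, one has $\sum r_i t_i \in (N_0:m_0)_R = P$, so the second summand lies in $m_0 P \subseteq PM$. Multiplying by $s''$ places $nss''$ in the finitely generated submodule $\sum n_i R + F'' \subseteq N_0$, making $N_0$ $S$-finite — the final contradiction.

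The main obstacle I anticipate is the technical claim in the second paragraph, which is the only place the noncommutative hypotheses do real work. Carefully combining the duo identity $rR = Rr$ with the commutation afforded by Lemma \ref{duo} — so that, after multiplying on the right by appropriate elements of $S$, one can swap $r$ past ring elements and past elements of $S$ themselves — is what converts the $S$-finiteness of the two auxiliary submodules $K$ and $N_0 + Mr$ into the $S$-finiteness of $N_0$; everything else in the proof is formal once this core step is in hand.
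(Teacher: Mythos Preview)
Your proposal is correct and follows essentially the same route as the paper: Zorn's Lemma to produce a maximal non-$S$-finite submodule $N_0$, setting $P=(N_0:M)$, showing $P$ is a prime ideal disjoint from $S$ via the duo hypothesis together with Lemma~\ref{duo}, and then using the assumed $S$-finiteness of $PM$ to reach a contradiction. The only organisational difference is that you isolate the identity $(N_0:m)=P$ for every $m\notin N_0$ as a central claim (from which both primality and the final step follow), whereas the paper first proves primality directly and then, using the $S$-finiteness of $M$ through a finitely generated $F$ with $wM\subseteq F$, shows $P=[N:f_i]$ for a single $f_i\in F$ before invoking the analogue of \cite[Proposition~4]{anderson}; the underlying computation with $N_0+Mr$ and $K=[N_0:r]$ is the same in both.
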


\begin{proof}
The ``only if" part is clear. For the converse, assume that $PM$
is $S$-finite for each $P$ prime ideal of $R$ with $P\cap S=\emptyset$.
Since $M$ is $S$-finite, $wM\subseteq F$ for some $w\in S$ and some
finitely generated submodule $F$. If $M$ is not $S$-Noetherian, the set
$\mathfrak {F}$ of all non-$S$-finite submodules of $M$ is not empty.
So $\mathfrak {F}$ has a maximal element like $N$ by Zorn's lemma.
We claim that $P=[N:M]:=\{r\in R\mid rM\subseteq N\}$ is a prime ideal of $R$ and is disjoint from $S$.
Suppose to the contrary that $P\cap S\neq \emptyset$ and  $s\in P\cap S$. Then we have
\begin{align*}
swN \subseteq swM\subseteq sF\subseteq sM\subseteq N.
\end{align*}
So $swN\subseteq sF\subseteq N$ and $N$ becomes $S$-finite. This contradiction
shows that $P\cap S=\emptyset$. Now suppose that $P$ is not a prime ideal of
$R$. So $P$ is not completely prime. So there exist $a,b\in R\setminus P$
and $ab\in P$. So $N+aM$ is $S$-finite, hence $s(N+aM)\subseteq (R(n_1+am_1)+ \cdots+ R(n_p+am_p))$
for some $s\in S$, $n_i\in N$ and $m_i \in M$. Also $[N:a]$ is $S$-finite. So
$t[N:a]\subseteq (Rq_1+Rq_2+\cdots+Rq_k)$ for some $t\in S$ and
$q_j\in [N:a]$. Since $R$ is duo and $S$ is a denominator set in $R$,   there exists $s''\in S$ such that $s''at=s''ta$ by Theorem \ref{duo}. Also $s(N+aM)\subseteq (R(n_1+am_1)+\cdots+R(n_p+am_p))$.
Thus $sx=\sum r_in_i+r_iam_i$. This means that $sx=\sum r_in_i+a\sum r'_im_i$ for
some $r'_i\in R$. Since $sx,\sum r_in_i\in N$, we have $\sum r'_im_i\in [N:a]$. So
\begin{align*}
s''tsx=s''t\sum r_in_i+s''t\sum ar'_im_i=\sum s''tr_in_i+s''at\sum r'_im_i=\sum s''tr_in_i+s''a\sum c_jq_j.
\end{align*}
So $s''tsx=\sum s''tr_in_i+\sum c's''_jaq_j$ for some $c'_j\in R$.
Hence $s''tsx\in (Rn_1+\cdots+Rn_p+Rs''aq_1+\cdots+Rs''aq_k)$.
So $s''tsN\subseteq (Rn_1+\cdots +Rn_p+Rs''aq_1+\cdots+Rs''aq_k) \subseteq N$.
Thus $N$ is $S$-finite and this contradicts to the fact that $N$ is maximal in $\mathfrak {F}$.
Therefore $P$ is a prime ideal of $R$.
Moreover  $P=[N:M]\subseteq [N:F]\subseteq [N:wM]=[P:w]= P$. Hence $[N:F]=P$.
Let $F=(Rf_1+Rf_2+\cdots+Rf_k)$. Since $R$ is a duo ring, $P=[N:\sum Rf_i]= \bigcap[N:f_i]$.
So $P=[N:f_i]$ for some $f_i\in\{f_1,f_2,\cdots ,f_k\}.$
One can show that $tN\subseteq (Rn_1+Rn_2+\cdots+Rn_l)+PM$
for some $t\in S$ and $n_i\in N$ as above or in similar way as that employed in \cite[Proposition 4]{anderson}.
Since $PM$ is $S$-finite,  $vPM\subseteq G\subseteq PM\subseteq N$
for some $v\in S$ and a finitely generated submodule $G$ of $M$. So
\begin{align*}
vtN\subseteq v(Rn_1+Rn_2\cdots +Rn_l)+vPM \subseteq (Rn'_1+Rn'_2\cdots +Rn'_l)+G\subseteq N
\end{align*}
for some $n'_i\in N$. So $N$ becomes $S$-finite which is a contradiction. So $M$ is $S$-Noetherian.
\end{proof}

\begin{lemma}
Let $R$ be a ring with an endomorphism $\sigma$. If $R[[x;\sigma]]$ is a duo ring,  then $\sigma$ is surjective.
\end{lemma}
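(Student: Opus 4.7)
The plan is to test the duo hypothesis against the right ideal generated by $x$ and extract surjectivity of $\sigma$ by comparing coefficients. First I would unpack the elements of $xR[[x;\sigma]]$ explicitly: for $g(x)=\sum_{i\ge 0}a_ix^i\in R[[x;\sigma]]$, the twisted multiplication rule $xa=\sigma(a)x$ gives
\[
x\cdot g(x)=\sum_{i\ge 0}(xa_i)x^i=\sum_{i\ge 0}\sigma(a_i)x^{i+1},
\]
so every series in $xR[[x;\sigma]]$ has all of its coefficients lying in $\sigma(R)$ (and has zero constant term).

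Next, I would invoke the duo assumption: since $R[[x;\sigma]]$ is in particular right duo, the right ideal $xR[[x;\sigma]]$ must also be a left ideal. Hence for every $r\in R$ the element $rx$ lies in $xR[[x;\sigma]]$, so there is some $g(x)=\sum a_ix^i$ with $rx=x\,g(x)=\sum_{i\ge 0}\sigma(a_i)x^{i+1}$. Comparing coefficients of $x^1$ yields $r=\sigma(a_0)\in\sigma(R)$, and since $r\in R$ was arbitrary this gives $\sigma(R)=R$, i.e. $\sigma$ is surjective.

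There is essentially no obstacle here; the entire argument is the observation that left multiplication by any $r\in R$ cannot land inside $xR[[x;\sigma]]$ unless $r$ is a $\sigma$-image, and right duo-ness is exactly what forces this containment. Note also that only the right duo half of the hypothesis is used, so the lemma in fact holds under the weaker assumption that $R[[x;\sigma]]$ is right duo.
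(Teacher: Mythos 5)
Your proof is correct and follows essentially the same route as the paper: both arguments force $rx$ (the paper's $ax$) into $xR[[x;\sigma]]$ via the duo hypothesis, write $rx=xg=\sum_{i\ge 0}\sigma(a_i)x^{i+1}$, and read off $r=\sigma(a_0)$ by comparing the coefficient of $x$. Your version only makes explicit what the paper leaves implicit, namely that it is the right-duo half of the hypothesis (two-sidedness of the right ideal $xR[[x;\sigma]]$) that is actually used.
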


\begin{proof}
Suppose that $a\in R$. Since $R[[x;\sigma]]$ is a duo ring we have $ax=xf$ such that $f=\sum_{i=0}^{\infty}f_ix^i$. So
$xf=x\sum_{i=0}^{\infty}f_ix^i=\sum_{i=0}^{\infty}\sigma(f_i)x^{i+1}$. Now, since $ax=xf$, $\sigma(f_i)=0$ for all $i \neq 0$ and $\sigma(f_0)=a$. Thus, for each $a\in R$ there exists $f_0\in R$ such that $a=\sigma(f_0)$.
\end{proof}

\begin{theorem}\label{power}
Let $R$ be a ring, $S\subseteq R$ a $\sigma$-anti-Archimedean
denominator set \emph{(}consisting nonzero devisors\emph{)} and $\sigma_1,\cdots,\sigma_n$ are  monomorphisms of $R$ with $\sigma_i\sigma_j=\sigma_j\sigma_i$, for each $i,j$. Assume that   $R[[X_1,\cdots ,X_n;\sigma_1,\cdots,\sigma_n]]$ is a duo ring.
If $R$ is $S$-Noetherian, then the ring $R[[X_1,\cdots ,X_n;\sigma_1,\cdots,\sigma_n]]$ is also $S$-Noetherian.
\end{theorem}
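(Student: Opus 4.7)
The strategy is to apply Proposition \ref{prime ideal} to the duo ring $T := R[[X_1, \ldots, X_n; \sigma_1, \ldots, \sigma_n]]$ viewed as a module over itself, with $S \subseteq R \hookrightarrow T$ playing the role of the multiplicative set. As preliminaries I would verify that $S$ remains a denominator set in $T$: the Ore condition is automatic from $T$ being duo (so $fs \in sT$ for every $f \in T$ and $s \in S$), and right reversibility follows because elements of $S$ remain non-zero-divisors in $T$---a consequence of the $\sigma_i$ being monomorphisms together with the non-zero-divisor hypothesis on $S$. Since $T$ is trivially $S$-finite over itself, Proposition \ref{prime ideal} reduces the proof to showing that every prime ideal $P$ of $T$ with $P \cap S = \emptyset$ is $S$-finite as a right (equivalently, two-sided) ideal.

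Fix such a prime $P$. For each multi-index $\alpha \in \mathbb{N}^n$, I would introduce the ``$X^\alpha$-coefficient ideal''
\[
J_\alpha = \{\,r \in R : r \text{ is the }X^\alpha\text{-coefficient of some } f \in P \text{ with } \supp(f) \subseteq \alpha + \mathbb{N}^n\,\}.
\]
These form an ascending family whose union $J = \bigcup_\alpha J_\alpha$ is a right ideal of $R$. Since $R$ is right $S$-Noetherian, $J s_0 \subseteq a_1 R + \cdots + a_k R \subseteq J$ for some $s_0 \in S$ and generators $a_i \in J_{\alpha_i}$, each arising as the leading coefficient of some $g_i \in P$ with $\supp(g_i) \subseteq \alpha_i + \mathbb{N}^n$. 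I would then invoke the $\sigma$-anti-Archimedean hypothesis, applied jointly to the commuting monomorphisms $\sigma_1, \ldots, \sigma_n$, to extract a single $u \in S$ belonging to every ideal $R \cdot \sigma_1^{j_1} \cdots \sigma_n^{j_n}(s_0)$ for arbitrary exponent tuples. This $u$ will serve as the universal denominator absorbing the skew twists accumulated below.

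The core step is an iterative reduction. Given $f \in P$, at each stage the lowest-multidegree nonzero coefficient of the current residue lies in $J$, so right-multiplication by $s_0$ places it inside $a_1 R + \cdots + a_k R$; subtracting the corresponding $R$-combination of the $g_i$ (with the appropriate $X^\beta$ shift on the right) cancels that coefficient, and the remainder has strictly larger valuation and still lies in $P$. Because the $\sigma_i$ commute, the twists that accumulate when monomials $X^\beta$ are moved past $s_0$ collect as words of the form $\sigma_1^{j_1} \cdots \sigma_n^{j_n}(s_0)$; the anti-Archimedean element $u$ dominates all of these simultaneously in $R$. By convergence in the power-series topology on $T$, the iteration produces an inclusion $P s^* \subseteq g_1 T + \cdots + g_k T \subseteq P$ for a single $s^* \in S$ built from $s_0$ and $u$, so that $P$ is $S$-finite, as required.

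The main obstacle is the final collapsing step: controlling the infinitely many skew-twisted copies of $s_0$ that appear across the iteration by a single element of $S$. This is exactly what the $\sigma$-anti-Archimedean hypothesis is engineered to supply, but one must carefully verify that the commutativity $\sigma_i \sigma_j = \sigma_j \sigma_i$ allows the cumulative twists to be organized as simple monomials $\sigma_1^{j_1} \cdots \sigma_n^{j_n}(s_0)$ (rather than nested compositions), so that the anti-Archimedean intersection applies directly. A secondary technical point is justifying that the coefficient ideals $J_\alpha$ are genuine right ideals of $R$---which uses that each $\sigma_i$ is a ring endomorphism---and that the subtraction at each step produces a remainder still in $P$; here the duo-primality of $T$ (so $T/P$ is a domain) and the regularity of each $X_i$ in $T$ are the tools that keep the reduction clean.
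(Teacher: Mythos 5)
Your overall skeleton (reduce via the Cohen-type Proposition \ref{prime ideal} to showing that $S$-disjoint primes $P$ of $T$ are $S$-finite, then use the $\sigma$-anti-Archimedean element to absorb the twisted copies of $s_0$) is the same as the paper's, but your core reduction step has a genuine gap. You build the union $J=\bigcup_\alpha J_\alpha$ of trailing-coefficient ideals and pick generators $a_i\in J_{\alpha_i}$ coming from $g_i\in P$ supported in $\alpha_i+\mathbb{N}^n$. The problem is that these generators can only cancel a trailing term of $f\in P$ sitting at a multidegree $\beta\ge\alpha_i$: to match orders you must right-multiply $g_i$ by $X^{\beta-\alpha_i}$, which does not exist when $\beta\not\ge\alpha_i$. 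Since $R$ is only $S$-Noetherian, the chain $(J_\alpha)$ need not stabilize, so you cannot renormalize all generators down to order $0$; thus elements of $P$ whose trailing multidegree is small relative to the $\alpha_i$ cannot be reduced at all, and the iteration never starts for them. (In the polynomial case this low-degree residue is absorbed by a finitely generated module argument, as in Theorem \ref{S-Noetherian}, but no analogue is available for power series.) A second, $n\ge 2$ issue: a series in $\mathbb{N}^n$ has in general several minimal support elements, so ``the lowest-multidegree nonzero coefficient of the current residue'' is not well defined, and the residues produced by your subtraction will not satisfy the condition $\supp(f)\subseteq\alpha+\mathbb{N}^n$ built into your definition of $J_\alpha$.

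The paper sidesteps both problems: it treats one variable at a time ($T=R[[x;\sigma]]$, iterating for $n>1$), splits into the cases $x\in P$ and $x\notin P$, and in the second case uses that $P$ is completely prime (duo plus prime) to divide out $x$ at every step: from $sf-\sum_j d_{0,j}g_j=xf_1$ and $x\notin P$ one gets $f_1\in P$, so the \emph{same} constant-term ideal $\pi(P)$, the same $s$, and the same finitely many $g_j$ are reused at every stage, and only then does the anti-Archimedean element $t$ collapse the accumulated factors $\prod_l\sigma^l(s)$. In your write-up primality and the regularity of the $X_i$ are invoked only to ``keep the reduction clean,'' but they are in fact the engine that replaces your order-indexed family $J_\alpha$ by a single coefficient ideal at order zero; without that division-by-$x$ step (or some substitute for it) the claimed inclusion $Ps^\ast\subseteq g_1T+\cdots+g_kT$ does not follow. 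To repair your argument you would essentially have to reinstate the paper's case distinction on $x\in P$ and the divide-by-$x$ recursion, i.e.\ revert to the Anderson--Dumitrescu scheme the paper follows.
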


\begin{proof}
We use the method   in \cite[Proposition 10]{anderson} employed by   Anderson and Dumitrescu. As $S$ is $\sigma$-anti-Archimedean in every ring containing $R$ as a subring,
we shall prove the case $n=1$, so we assume that  $T=R[[x;\sigma]]$ is duo and
$\sigma$ is an automorphism of $R$. It is enough to prove that every prime ideal $P$ of $T$ is $S$-finite. Let
$\pi:T\rightarrow R$ the $R$-algebra homomorphism sending $x$ to zero and $P'=\pi(P)$.
Since $R$ is $S$-Noetherian, there exists $s\in S$ such that $sP'\subseteq (Rg_1(0)+Rg_2(0)+ \cdots +Rg_k(0))$
for some $g_i\in P$. If $x\in P$, then $P=( TP'+Tx)$. If $g_i(x)=\sum a_ix^i$, then
$g_i(x)=\sum x^i\sigma^{-i}(a_i)\in (TP'+Tx)$. So
$sP\subseteq (TP'+Tx)= (Tg_1+\cdots +Tg_k) \subseteq P$.
This means that $P$ is $S$-finite. Let $x\notin P$ and $f\in P$. So
$sf(0)=\sum d_{0,j}g_j(0)$ for some $d_{0,j}\in R$. So $xf_1=sf-\sum d_{0,j}g_j \in P$
for some $f_1\in T$. Considering $x\notin P$, $f_1\in P$. So $sf_1=\sum d_{1,j}g_j+xf_2$ for some $f_2\in T$.
Hence $\sigma(s)sf=\sum \sigma(s) d_{0,j}g_j+x\sum d_{1,j}g_j+x^2f_2$. Also $f_2\in P$,
since $x\notin P$ and $sf_1-\sum d_{1,j}g_j \in P$. In this way, one can see that for each $L\ge 0$,
\begin{align*}
\big(\prod_{l=0}^{L}\sigma^l(s)\big)f=\sum_{i=0}^{L}x^i\sum_{j=1}^k (\prod_{l=i+1}^{L}\sigma^l(s))d_{i,j}g_j+x^{L+1}f_{L+1}.
\end{align*}
Since $S\cap \big(\bigcap_{l\geq 1,i_j\in \mathbb{N}\cup \{0\}}\sigma^{i_1}(s)\cdots \sigma^{i_l}(s)R\big)\neq \emptyset$,
 there exists $t\in R$ such that $\frac{t}{\sigma^{i_1}(s)\cdots \sigma^{i_k}(s)}\in R$ for
each $i_j\in \mathbb{N}\cup \{0\}$, $k\in \mathbb{N}$. Moreover
\begin{align*}
tf=\sum_j \sum_i \Big(\frac{ts\sigma^{-i}(d_{ij})}{\prod_l\sigma^l(s)} \Big)x^ig_j.
\end{align*}
So $tf=\sum_j h_jg_j$ where $h_j=\sum_i \frac{ts\sigma^{-i}(d_{i,j})}{\prod_l\sigma^l(s)}x^i$. So
$tf\in (Tg_1+Tg_2+\cdots +Tg_k)$. Hence $tP\subseteq (Tg_1+Tg_2+\cdots +Tg_k)$.
Since $g_i\in P$, $(Tg_1+Tg_2+\cdots +Tg_k) \subseteq P$. Thus
$R[[x;\sigma]]$ is an $S$-Noetherian ring.
\end{proof}

The following proposition which is proved in \cite{anderson}, is the corollary of the above theorem.

\begin{corollary}\emph{\cite[Proposition 10]{anderson}}
Let $R$ be a commutative ring and $S\subseteq R$ an anti-Archimedean
multiplicative set of $R$. If $R$ is $S$-Noetherian, then so is $R[[X_1,\cdots ,X_n]]$.
\end{corollary}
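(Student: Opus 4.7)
The plan is to obtain the corollary as a direct specialization of Theorem \ref{power}. The natural choice is to take $\sigma_i = \mathrm{id}_R$ for every $i = 1, \ldots, n$, so that $R[[X_1,\ldots,X_n;\sigma_1,\ldots,\sigma_n]]$ becomes the ordinary commutative power series ring $R[[X_1,\ldots,X_n]]$. First I would check that each of the hypotheses of Theorem \ref{power} is automatically satisfied in this commutative setting: the identity is a monomorphism, the family $\{\mathrm{id}_R\}$ commutes with itself, the multiplicative set $S$ in a commutative ring is automatically a denominator set (both Ore conditions and reversibility are trivial), and the commutative ring $R[[X_1,\ldots,X_n]]$ is of course duo. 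The nonzero-divisor hypothesis on $S$ in Theorem \ref{power} matches exactly the corresponding hypothesis in \cite[Proposition 10]{anderson}, so it carries over without loss.

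Next I would verify that when $\sigma = \mathrm{id}_R$ the $\sigma$-anti-Archimedean condition of the preceding definition collapses to the classical anti-Archimedean condition of \cite{anderson}. Indeed, substituting $\sigma^{k_i}(s) = s$ for all $k_i \ge 0$ and all $l \ge 1$ reduces
\[
\Bigl(\bigcap_{l \ge 1,\, k_i \ge 0} R\,\sigma^{k_1}(s)\sigma^{k_2}(s)\cdots \sigma^{k_l}(s)\Bigr) \cap S
\]
to $\bigl(\bigcap_{l \ge 1} R s^{l}\bigr) \cap S$, which is precisely the classical definition used by Anderson and Dumitrescu. Hence our assumption that $S$ is anti-Archimedean guarantees that $S$ is $\mathrm{id}_R$-anti-Archimedean in the sense required by Theorem \ref{power}.

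With all hypotheses verified, Theorem \ref{power} applied to the data $(R, S, \mathrm{id}_R, \ldots, \mathrm{id}_R)$ yields that $R[[X_1,\ldots,X_n]]$ is $S$-Noetherian, which is exactly the statement of the corollary. No further argument is needed; essentially the only thing to check is that the definitions line up in the commutative/trivial-twist case. There is no real obstacle here, since the substantive work (the ideal-by-ideal reduction using the $\sigma$-anti-Archimedean property) was carried out in the proof of Theorem \ref{power}; the corollary is a translation exercise meant to recover the commutative result of \cite[Proposition 10]{anderson} as a special case of our skew generalization.
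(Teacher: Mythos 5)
Your proposal is correct and is essentially the paper's own route: the corollary is obtained by specializing Theorem \ref{power} to $\sigma_1=\cdots=\sigma_n=\mathrm{id}_R$, where commutativity makes $R[[X_1,\ldots,X_n]]$ duo, makes $S$ a denominator set, and collapses the $\sigma$-anti-Archimedean condition to the classical one $\bigl(\bigcap_{l\ge 1} Rs^{l}\bigr)\cap S\neq\emptyset$. Your remark that the nonzero-divisor hypothesis is inherited from \cite[Proposition 10]{anderson} is the right way to reconcile the corollary's wording with the hypotheses of Theorem \ref{power}.
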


A ring R is called \emph{strongly regular} if every principal right (or left) ideal is generated by a central idempotent.
A ring is said to be left \emph{self injective} if it is injective as a left module over itself.
Hirano in \cite[Theorem 4]{hirano} shows that if $R$ is a self-injective
strongly regular ring, then $R[[x]]$ is a duo ring. \\

We have the following generalization of a theorem of D.D. Anderson and Tiberiu Dumitrescu \cite[Proposition 10]{anderson}.

\begin{theorem}
Let $R$ be a duo ring with an automorphism $\sigma$ and $S\subseteq R$ a $\sigma$-anti-Archimedean
denominator set \emph{(}consisting nonzero devisors\emph{)}.
If $R$ is $S$-Noetherian, then so is the skew power series ring $R[[x;\sigma]]$.
\end{theorem}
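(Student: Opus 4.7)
The strategy is to invoke Theorem \ref{power} with $n=1$, which already yields the $S$-Noetherian property of $T := R[[x;\sigma]]$ whenever $T$ is duo. Thus the plan reduces to establishing that $T$ is a duo ring under the present hypotheses; once this is done, the remaining conditions of Theorem \ref{power} ($\sigma$ a monomorphism, $S$ a $\sigma$-anti-Archimedean denominator set of nonzero divisors, $R$ being $S$-Noetherian) are immediate from the assumptions, and the conclusion follows at once.

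For duo-ness of $T$, I would work with principal one-sided ideals $Tf$ and $fT$ and the $x$-adic filtration. For a constant $a \in R$, the duo property of $R$ gives $aR = Ra$; the commutation $xa = \sigma(a)x$ together with $\sigma$ being an automorphism gives $Rx = xR$, and inductively $Rx^n = x^n R$ for every $n \ge 1$. Expanding $f = \sum a_i x^i$ and a generic $g = \sum b_j x^j$, one compares $gf$ with $fg'$ coefficient by coefficient, repeatedly invoking $R$-duoness to reorder products of the form $a_i b_j$ and using $\sigma^{\pm 1}$ to shuffle powers of $x$ past $R$-coefficients. The $x$-adic completeness of $T$ should then allow these local rewritings to be assembled into a global element $g' \in T$ realizing $gf = fg'$, and symmetrically in the opposite direction, yielding $Tf = fT$ and hence duo-ness of $T$. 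With $T$ shown to be duo, Theorem \ref{power} applies and closes the argument.

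The main obstacle is the duo step. A direct calculation shows that writing $af = f'a$ for $a \in R$ and $f = \sum b_i x^i \in T$ demands $a b_i \in R\sigma^i(a)$ for every $i$, which is a genuine constraint not implied by $R$-duoness in isolation. The argument must therefore exploit further structure, namely the compatibility between $\sigma$ and the two-sided ideal lattice of $R$, in the spirit of Hirano's theorem cited just before the statement (which covers the identity-twist case under strong regularity and self-injectivity). The analogous skew compatibility together with the non-zero-divisor and $\sigma$-anti-Archimedean hypotheses on $S$ should suffice to verify $T$ is duo; once this delicate step is cleared, the $S$-Noetherian conclusion is an immediate consequence of Theorem \ref{power}.
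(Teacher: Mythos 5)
Your plan stands or falls on the claim that $T=R[[x;\sigma]]$ is duo whenever $R$ is duo (possibly with help from the hypotheses on $S$), so that Theorem \ref{power} can be invoked verbatim. That claim is exactly what fails: the paper itself records, just before Lemma \ref{duo}, that a power series ring over a duo ring need not be duo on either side, and the only positive result cited (Hirano) needs strong regularity and self-injectivity, neither of which is assumed here. Your own computation already exposes the obstruction ($ab_i\in R\sigma^i(a)$ for all $i$), and the suggestion that the $\sigma$-anti-Archimedean and nonzero-divisor conditions on $S$ would rescue duo-ness is not argued and is implausible --- those are conditions on a multiplicative set, with no bearing on the two-sided ideal structure of $T$. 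So the proposal reduces the theorem to an unproved (and in general false) lemma, and the reduction to Theorem \ref{power} as stated cannot be completed this way.

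The paper takes a different and cheaper route: it does not prove $T$ is duo, but reruns the proof of Theorem \ref{power} with $T$ not assumed duo, isolating the single place where duo-ness of $T$ was used, namely the step ``$x\notin P$ and $xf_1\in P$ imply $f_1\in P$'' (complete primeness). That step is replaced by the observation that $xh\in I$ forces $xTh\subseteq I$ for any left ideal $I$ of $T$ (since $xr=\sigma(r)x$ and $\sigma$ is onto, $xT=Tx$, so $xTh=Txh\subseteq I$); then ordinary primeness of $P$ together with $x\notin P$ yields $f_1\in P$. The case $x\in P$ is handled as before using bijectivity of $\sigma$ to rewrite $\sum a_ix^i=\sum x^i\sigma^{-i}(a_i)$, and the rest (the reduction to prime ideals and the $\sigma$-anti-Archimedean bookkeeping producing $tP\subseteq (Tg_1+\cdots+Tg_k)\subseteq P$) is unchanged. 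If you want to salvage your draft, replace the ``prove $T$ is duo'' step by this primeness argument; as written, the proposal has a genuine gap at its central point.
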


\begin{proof}
We can prove this theorem in a similar way as in Theorem \ref{power}. Consider the notations in the proof of Theorem \ref{power}. Let $x\in P$. Since $\sigma$ is bijective, $P$ is $S$-finite. Let $x\notin P$ and $f\in P$, so $xf_1=sf-\sum d_{0i}g_i \in P$. Note that for each $h\in R[[x;\sigma]]$ and $I$  is a left ideal of $R[[x;\sigma]]$, $xh\in I$ yields that $xR[[x;\sigma]]h\in I$. So $f_1\in P$. The rest of the proof is similar to what we did in Theorem \ref{power}.
\end{proof}

The following corollary is a generalization of the case $n=1$ in \cite[Proposition 10]{anderson} for the category of duo rings.

\begin{corollary}
Let $R$ be a duo ring and $S\subseteq R$ an anti-Archimedean
denominator set \emph{(}consisting nonzero devisors\emph{)} of $R$.
If $R$ is $S$-Noetherian, then  so is the  power series ring $R[[x]]$.
\end{corollary}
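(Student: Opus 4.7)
The plan is to derive this corollary as a direct specialization of the preceding theorem by taking $\sigma$ to be the identity automorphism of $R$. First I would observe that the identity is trivially an automorphism, so the hypothesis that $R$ is a duo ring equipped with an automorphism is satisfied. Since $R[[x;\mathrm{id}]]$ coincides with the ordinary power series ring $R[[x]]$, the conclusion of the previous theorem translates directly to the desired statement.

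The only point that genuinely needs checking is that the $\sigma$-anti-Archimedean hypothesis specializes correctly to the classical anti-Archimedean hypothesis when $\sigma = \mathrm{id}$. When $\sigma$ is the identity, every iterate $\sigma^{k_i}(s)$ equals $s$, so the product $\sigma^{k_1}(s)\sigma^{k_2}(s)\cdots \sigma^{k_l}(s)$ collapses to $s^l$. Consequently the $\sigma$-anti-Archimedean condition
\begin{align*}
\Bigl(\bigcap_{l\geq 1,\, k_i\ge 0} R\sigma^{k_1}(s)\sigma^{k_2}(s)\cdots \sigma^{k_l}(s)\Bigr)\cap S \neq \emptyset
\end{align*}
reduces to $\bigl(\bigcap_{l\geq 1} Rs^{l}\bigr)\cap S \neq \emptyset$, which is precisely the anti-Archimedean condition for $S$ as introduced in \cite{andersonI}. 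Thus the anti-Archimedean denominator set $S$ of the corollary is automatically $\mathrm{id}$-anti-Archimedean in the sense of the paper's definition.

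With these identifications in place, the remaining hypotheses on $S$ (being a denominator set of nonzero divisors) and on $R$ (being duo and $S$-Noetherian) match verbatim between the corollary and the preceding theorem. I would then simply apply the theorem with $\sigma = \mathrm{id}$ to conclude that $R[[x]] = R[[x;\mathrm{id}]]$ is $S$-Noetherian. There is no serious obstacle here; the only subtlety worth flagging explicitly in the write-up is the verification that the two notions of anti-Archimedean coincide under $\sigma = \mathrm{id}$, so that invoking the theorem is legitimate.
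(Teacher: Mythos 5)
Your proposal is correct and matches the paper's intent: the corollary is exactly the preceding theorem on $R[[x;\sigma]]$ specialized to $\sigma=\mathrm{id}$, and your check that the $\sigma$-anti-Archimedean condition collapses to the classical anti-Archimedean condition $\bigl(\bigcap_{l\ge 1} Rs^{l}\bigr)\cap S\neq\emptyset$ is precisely the identification needed. Nothing further is required.
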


Now we  extend  the last result for the skew generalized power series ring $R[[M,\omega]]$.

\begin{theorem}
Let $R$ be a duo ring,  $(M,\le)$  a positive
strictly ordered commutative monoid and $\omega_{m}$  a monomorphism of $R$ with $\omega_m\omega_n=\omega_n\omega_m$ for each $m,n\in M$. Assume that $S\subset R$ is an $\omega_m$-anti-Archimedean
denominator set \emph{(}consisting nonzero devisors\emph{)} of $R$ and  $R[[M,\omega]]$ be a duo ring. Then
  $R[[M,\omega]]$ is left (or right) $S$-Noetherian if and only if $R$ is left (or right) $S$-Noetherian and $M$ is finitely generated.
\end{theorem}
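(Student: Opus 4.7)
The plan is to prove the two implications separately, leveraging Proposition \ref{prime ideal} and the peeling argument of Theorem \ref{power} as the main engines, and using the structural surjection from the proof of Theorem \ref{go} to reduce $M$ to a finitely generated free setting.

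For the forward direction, the argument closely parallels that of Theorem \ref{go}. First I would show $M$ is finitely generated: if not, one produces a strictly ascending chain of finitely generated submonoids $M_1\subsetneq M_2\subsetneq\cdots$ of $M$ and passes to the chain of left ideals $I_n\triangleleft T:=R[[M,\omega]]$ generated by $\{e_m : m\in M_n,\, m\neq 1\}$. Because distinct monoid elements give $R$-linearly independent generators in $T$ (and $S$ consists of nonzero divisors), no $s\in S$ can pull $I_{n+1}$ into $I_n$, contradicting $S$-Noetherianity. To see that $R$ itself is $S$-Noetherian, for each ideal $I\triangleleft R$ I would form $I_T=\{f\in T : \omega_m(f(m'))\in I\text{ for all }m,m'\in M\}$ exactly as in Theorem \ref{go}; $S$-finiteness of $I_T$ yields $s\in S$ and finitely many $f_1,\dots,f_n\in I_T$ with $sI_T\subseteq\sum Tf_i\subseteq I_T$, and evaluating at the identity of $M$ (using $\omega_1=\mathrm{id}$) extracts an $S$-finite presentation $sI\subseteq\sum Rf_i(1)\subseteq I$.

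For the backward direction, the key idea is to apply Proposition \ref{prime ideal} with $T$ itself in the role of the duo ring and $T$ viewed as an $S$-finite module over itself (trivially, via $s=1$), where $S\subseteq R\subseteq T$ remains a denominator set in $T$ because $T$ is duo and the elements of $S$ are nonzero divisors. This reduces the task to showing that every prime ideal $P\triangleleft T$ with $P\cap S=\emptyset$ is $S$-finite. To do this I would adapt the peeling argument of Theorem \ref{power}: let $\pi:T\to R$ be evaluation at $1\in M$, set $P'=\pi(P)$, and use $S$-Noetherianity of $R$ to obtain $s\in S$ and $g_1,\dots,g_k\in P$ with $sP'\subseteq(g_1(1)R+\cdots+g_k(1)R)$. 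Because $M$ is finitely generated, there is a surjective strict monoid homomorphism $\varphi:\mathbb{F}^n\to M$ (as used in Theorem \ref{go}) inducing a surjection of generalized power series rings, so supports of elements of $T$ can be enumerated by the Artinian order on $\mathbb{F}^n$; one then strips off the minimal-support coefficient of a given $f\in P$ against the $g_i$'s, at the cost of multiplying on the left by an $\omega_m$-shifted copy of $s$, and iterates. The $\omega$-anti-Archimedean condition produces a single $t\in S$ that absorbs all products $\omega_{m_1}(s)\cdots\omega_{m_l}(s)$ encountered in the iteration, so the process converges to a relation $tP\subseteq (Tg_1+\cdots+Tg_k)$, yielding the required $S$-finiteness of $P$.

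The main obstacle is the peeling step in the backward direction: in Theorem \ref{power} the support is well-ordered by $\mathbb{N}$ and each reduction produces exactly one new $\omega$-shifted factor of $s$, but here supports lie in an arbitrary positive strictly ordered monoid and are only Artinian-narrow, so one must simultaneously track the finitely many minimal elements of a support and coordinate multiple $\omega_{m_i}$-shifts. Two strategies can resolve this: either run the argument directly inside $T$ via the lower-set analysis of Brookfield's construction recalled before Theorem \ref{go}, or transfer $S$-Noetherianity along the surjection $R[[\mathbb{F}^n,\omega']]\twoheadrightarrow R[[M,\omega]]$, applying Theorem \ref{power} upstairs. The latter route is cleaner but requires the intermediate ring to be duo, which is not part of our hypothesis; hence I would prefer the direct route, combining Proposition \ref{prime ideal} with the $\omega$-anti-Archimedean convergence trick of Theorem \ref{power} adapted to the Artinian-narrow support structure.
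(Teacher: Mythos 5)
Your forward-direction sketch contains a step that fails as stated. You argue that $M$ must be finitely generated because, for a strictly ascending chain of submonoids $M_1\subsetneq M_2\subsetneq\cdots$, ``linear independence of the $e_m$'' prevents any $s\in S$ from pulling $I_{n+1}$ into $I_n$. But membership of $c_se_m$ in $I_n=\sum_{m'\in M_n,\ m'\neq 1}Ae_{m'}$ is governed by divisibility in $(M,\preccurlyeq)$, not by linear independence: if $m=um'$ with $m'\in M_n$, then $c_se_m=(c_se_u)e_{m'}\in I_n$ even though $m\notin M_n$ (for instance $M=(\mathbb{Q}_{\geq 0},+)$, $m'=1\in M_n$, $m=3/2$). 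So the contradiction you invoke never materializes. The paper's argument is different in exactly this respect: it applies $S$-finiteness to the single left ideal $\sum_n Ae_{m_n}$ attached to an arbitrary sequence $(m_n)$, extracts a divisibility relation $m_{i_t}\preccurlyeq m_l$, and only then concludes finite generation from Brookfield's Lemma 3.3 for positively ordered monoids; that order-theoretic step is indispensable and is missing from your plan. A smaller gap in the same direction: to get $sI\subseteq\sum Rf_i(1)$ from $S$-finiteness of your $I_T$ you need, for each $r\in I$, some $g\in I_T$ with $g(1)=r$, and with your definition this forces $\omega_m(r)\in I$ for all $m$, which is not automatic since $I$ need not be $\omega$-stable; the paper instead works with the ideal $AI$ and a coefficient-extraction map.

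For the backward direction your route (a Cohen-type reduction via Proposition \ref{prime ideal} applied to $T=R[[M,\omega]]$ over itself, followed by a peeling argument inside $T$) is genuinely different from the paper's, but its crux is precisely the part you do not prove. In Theorem \ref{power} the peeling works because once the constant term of $sf-\sum_j d_{0,j}g_j$ is removed the remainder factors as $xf_1$ for the single generator $x$; over a general positive strictly ordered monoid the remainder's support merely avoids the identity and has finitely many pairwise incomparable minimal elements, so there is no single $e_m$ to factor out, the bookkeeping of the $\omega$-shifted copies of $s$ branches, and neither termination of the iteration nor the fact that the assembled coefficients $h_j$ have Artinian and narrow support (so that they lie in $T$) is established. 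You name this obstacle and offer two strategies, but carry out neither, so the hard implication remains a plan rather than an argument. The paper instead argues structurally: finite generation of $M$ gives a strict monoid surjection $\mathbb{F}^n\to M$, hence a ring surjection $R[[\mathbb{F}^n,\omega,\preceq]]\to R[[M,\omega]]$; the upstairs ring is an iterated skew power series ring, asserted to be $S$-Noetherian via Theorem \ref{power}, and $S$-Noetherianity passes to homomorphic images by (the noncommutative version of) Liu's lemma. Your objection to this transfer route -- that Theorem \ref{power} needs the upstairs ring to be duo, which is not among the hypotheses -- is a fair criticism that applies to the paper's own proof as well, but identifying that weakness does not substitute for completing the direct argument you propose.
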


\begin{proof}
($\Leftarrow$) We use the method of G. Brookfeild employed in \cite{brook}. We know that the surjective homomorphism $\varphi:\mathbb{F}^n\longrightarrow M$ (where $\mathbb{F}$ is a free monoid)
induces a projection $$\varphi^*:R[[\mathbb{F}^n,(\omega,\preceq)]]\longrightarrow R[[M,(\omega,\le)]]$$
and $R[[M,(\omega,\le)]]=R[[M,\omega]]$ by \cite[Theorem 4.3]{brook}.
Moreover, since $R[[\mathbb{F}^n,(\omega,\preceq)]]$ is $S$-Noetherian, so is $R[[M,\omega]]$ by \cite[Lemma 2.2]{liu} for noncommutative version.

($\Rightarrow$) Let $A:=R[[M,\omega]]$ be $S$-Noetherian. Let $\{m_n| n\in \mathbb{N}\}$
be an infinite sequence in $M$. Let $I=(Ae_{m_1}+Ae_{m_2}+\cdots)$.
Since $A$ is $S$-Noetherian, there exists $s\in S$ such that $c_sI\subseteq J\subseteq I$
for $J$ finitely generated ideal of $A$. So $c_sI\subseteq (Ae_{m_{i_1}}+Ae_{m_{i_2}}+\cdots+Ae_{m_{i_k}})$
for some $k\in \mathbb{N}$. So $c_se_{m_l}=\sum_{t=0}^k f_t e_{m_{it}}$ for
some $l\neq i_t$. So $m_l\in \bigcup_{t=0}^k \supp(f_te_{m_{i_t}})$ for each $m\in M$,
$(f_te_{m_{i_t}})(m)=\sum_{m'm''=m}f_t(m')\omega_{m'}(e_{m_{i_t}}(m''))$.
So $m_{l}\in \bigcup_{t=0}^k \big\{\supp(f_t)+\supp(\omega_{m'}(e_{m_{i_t}}(m'')))\big\}$.
There exists $m_1\in M$ such that $m_1m_{it}=m$ for some $0\le t \le L$. So
$$(f_te_{m_{i_t}})(m)=f_t(m_1)\omega_{m_1}(e_{m_{i_t}}(m_{i_t}))=f_t(m_1).$$
Thus $m_1\in \supp(f_t)$ and $m_1m_{i_t}\in \supp(f_te_{m_{i_t}})$ for some $0\le t\le L$.
So for each $m\in \supp(\omega_{m'}(e_{m_{i_t}}(m')))$, $m_{i_t}\preceq m$ for some
$0\le t \le L$. Since $m_l \in \supp(f_te_{m_{i_t}})$, $m_{i_t}\preceq m_l$ for some $0\le t \le L$.
Since $M$ is positive strictly ordered monoid, $M$ is finitely generated by \cite[Lemma 3.3]{brook}.

Let $I$ be an ideal of $R$, so $AI$ is an ideal of $A$. So there exists $s\in S$ such that
$c_sAI\subseteq J\subseteq AI$ for some $J$ finitely generated ideal of $A$.
Set
\begin{align*}
T=\{f(\pi(f))| f\in c_sAI\}.
\end{align*}
We claim that $T=sI$. Let $t\in T$, so $t=h(\pi(h))$ and $h=c_sg$
for some $g\in AI$. So $t=sg(\pi(sg))$. This means that $t\in sI$
considering the fact that
\begin{align*}
I=\{f(\pi(f))| f\in AI\}.
\end{align*}
So $T\subseteq sI$. Now let  $i\in I$, so $i\in AI$.
Since $si(m)=0$ for $m\neq 1$,  $si\in T$. Thus $sI\subseteq T$.
Hence $sI=T$. But $sI=T\subseteq J'\subseteq I$ where $J'=\{f(\pi(f))|f\in J\}$.
Let $J=(Aj_1+Aj_2+\cdots+Aj_p)$. So it is easy to show that
$J'=(Rj_1(\pi(j_1))+Rj_2(\pi(j_2))+\cdots+Rj_p(\pi(j_p)))$.
So $J'$ is finitely generated in $R$. Hence $I$ is $S$-finite and $R$ is left $S$-Noetherian.
\end{proof}
Recall from \cite{Brewer}, that  a ring $R$
is right (left) $\aleph_0$-injective provided any homomorphism from a countably
generated right (left) ideal of $R$ into $R$ extends to a right (left) $R$-module
endomorphism of $R$. By an $\aleph_0$-injective ring we mean a right and left
$\aleph_0$-injective ring.
\begin{corollary}
Let $R$ be an strongly regular and an $\aleph_0$-injective ring with automorphisms $\sigma_1,\sigma_2$
such that $\sigma_1\sigma_2=\sigma_2\sigma_1$. Assume that  $S\subset R$ is an $\sigma_1$, $\sigma_2$ anti-Archimedean  denominator set \emph{(}consisting nonzero devisors\emph{)}. If $R$ is left (or right) $S$-Noetherian, then so is $R[[x,y;\sigma_1,\sigma_2]]$.
\end{corollary}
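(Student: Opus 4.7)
The strategy is to apply the preceding theorem on skew generalized power series to the monoid $M=\mathbb{N}^{2}$ equipped with the componentwise order, and to interpret
\[
R[[x,y;\sigma_1,\sigma_2]] \;=\; R[[\mathbb{N}^{2},\omega]]
\]
via the monoid homomorphism $\omega\colon \mathbb{N}^{2}\to \End(R)$, $\omega_{(i,j)}=\sigma_1^{\,i}\sigma_2^{\,j}$. The commuting hypothesis $\sigma_1\sigma_2=\sigma_2\sigma_1$ is exactly what is needed for this formula to define a monoid homomorphism and for the hypothesis $\omega_m\omega_n=\omega_n\omega_m$ of the preceding theorem to hold. Clearly $(\mathbb{N}^{2},\leq)$ is a positive strictly ordered commutative monoid and is finitely generated (by the two standard generators), and by construction each $\omega_m$ is an automorphism.

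Once these book-keeping checks are made, the hypotheses of the last theorem are satisfied except for one: that the skew generalized power series ring $R[[\mathbb{N}^{2},\omega]]$ is a duo ring. This is the main obstacle and is where the strongly regular plus $\aleph_{0}$-injective hypothesis enters. The plan is to invoke Hirano's result (already quoted in the paper) in an iterated, skew form: a strongly regular $\aleph_{0}$-injective ring with an automorphism has a duo skew power series ring, and the class of such rings is preserved by passing to a skew power series extension, so that
\[
R[[x,y;\sigma_1,\sigma_2]] \;\cong\; \bigl(R[[x;\sigma_1]]\bigr)[[y;\tilde\sigma_2]]
\]
is duo, where $\tilde\sigma_2$ is the coefficient-wise extension of $\sigma_2$. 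The $\aleph_{0}$-injectivity (as opposed to mere self-injectivity) is precisely what one needs so that the duo property is inherited after the first power-series extension and Hirano's argument can be applied a second time.

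With $R[[x,y;\sigma_1,\sigma_2]]$ shown to be duo, all hypotheses of the preceding theorem are in place: $R$ is $S$-Noetherian, $S$ is a $\sigma_i$-anti-Archimedean denominator set of nonzero divisors (hence $\omega_m$-anti-Archimedean for every $m\in \mathbb{N}^{2}$, since $\omega_m$ is a composition of powers of $\sigma_1,\sigma_2$), $\mathbb{N}^{2}$ is finitely generated, and the $\omega_m$ commute pairwise and are automorphisms. Applying the preceding theorem yields that $R[[\mathbb{N}^{2},\omega]]$, and therefore $R[[x,y;\sigma_1,\sigma_2]]$, is left (respectively right) $S$-Noetherian. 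I expect the only non-routine step of the write-up to be the duo verification described above; the rest is identification of the ring with a skew generalized power series ring and direct invocation of the previous theorem.
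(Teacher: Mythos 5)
Your reduction to the generalized power series theorem over $M=\mathbb{N}^2$ stands or falls with the hypothesis you correctly single out as the crux: that $R[[\mathbb{N}^2,\omega]]\cong R[[x,y;\sigma_1,\sigma_2]]$ is a duo ring. The mechanism you propose for this step fails. You claim that the class of strongly regular $\aleph_0$-injective rings is preserved under skew power series extension, so that the Hirano/Mazurek--Ziembowski duo criterion can be applied a second time to $\bigl(R[[x;\sigma_1]]\bigr)[[y;\tilde\sigma_2]]$. But $R[[x;\sigma_1]]$ is never von Neumann regular (hence never strongly regular) when $R\neq 0$: if $x=xfx$ with $f=\sum_i f_ix^i$, then $xfx=\sum_i\sigma_1(f_i)x^{i+2}$ has zero coefficient in degree $1$, a contradiction. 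So the duo criterion cannot be iterated, the duo-ness of the two-variable ring is left unproved (and is in fact dubious in general, precisely because the known duo criteria for power series rings force the coefficient ring to be strongly regular), and in addition you would still have to verify the $\omega_m$-anti-Archimedean hypothesis for the composite maps $\omega_{(i,j)}=\sigma_1^{i}\sigma_2^{j}$, which does not follow formally from the $\sigma_1$-, $\sigma_2$-anti-Archimedean assumptions.

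The paper's proof avoids this obstacle entirely by never requiring the two-variable ring to be duo: it iterates the one-variable theorem whose hypothesis is only that the \emph{coefficient} ring is duo. By \cite{zhim}, $A=R[[x;\sigma_1]]$ is a duo ring, and the one-variable skew power series theorem gives that $A$ is left $S$-Noetherian; a second application of that theorem, now to the duo ring $A$ with the extension of $\sigma_2$, yields that $A[[y;\sigma_2]]\cong R[[x,y;\sigma_1,\sigma_2]]$ is left $S$-Noetherian. If you wish to keep your route through $R[[\mathbb{N}^2,\omega]]$, you must supply an independent proof that this ring is duo, which your argument does not provide; otherwise the repair is to reroute through the iterated one-variable statement as the paper does.
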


\begin{proof}
Assume that $R$ is an strongly regular and $\aleph_0$-injective ring. Then by \cite{zhim}, $A=R[[x;\sigma_1]]$ is duo ring and $S$-left Noetherian ring. Then $A[[y;\sigma_2]]$ is left $S$-Noetherian ring.
\end{proof}

The following corollary is a generalization of the case $n=2$ of in \cite[Proposition 10]{anderson} for the category of duo rings.

\begin{corollary}
Let $R$ be an strongly regular  self-injective ring and $S\subseteq R$ an anti-Archimedean denominator set \emph{(}consisting nonzero devisors\emph{)} of $R$. If $R$ is left (or right) $S$-Noetherian, then so is $R[[x,y]]$.
\end{corollary}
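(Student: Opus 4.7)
The plan is to deduce this corollary as a direct specialization of the preceding corollary by taking both twisting automorphisms to be the identity map on $R$. So I would set $\sigma_1 = \sigma_2 = \mathrm{id}_R$, check that every hypothesis of that corollary survives this substitution, and then read off the conclusion, noting that $R[[x, y; \mathrm{id}_R, \mathrm{id}_R]] = R[[x, y]]$.

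The verifications to carry out are, in order: first, $\sigma_1 \sigma_2 = \sigma_2 \sigma_1$ is immediate since both sides equal $\mathrm{id}_R$. Second, I would observe that when $\sigma = \mathrm{id}_R$, the powers $\sigma^{k_i}(s)$ collapse to $s$, so for each $s \in S$ the intersection
\begin{align*}
\Bigl(\bigcap_{l \geq 1,\, k_i \geq 0} R\, \sigma^{k_1}(s)\sigma^{k_2}(s)\cdots \sigma^{k_l}(s)\Bigr)\cap S
\end{align*}
reduces to $\bigl(\bigcap_{l\geq 1} R s^l\bigr) \cap S$, which is precisely the ordinary anti-Archimedean condition. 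Hence the assumption that $S$ is an anti-Archimedean denominator set of nonzero divisors immediately supplies the $\sigma_1$- and $\sigma_2$-anti-Archimedean hypotheses required upstream. Third, I would note that every self-injective ring is in particular $\aleph_0$-injective, because any homomorphism from a countably generated one-sided ideal into $R$ extends to $R$ by the usual injectivity hypothesis; so the self-injectivity assumption subsumes the $\aleph_0$-injectivity requirement of the previous corollary.

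With these three checks in hand, the preceding corollary applies verbatim and yields that $R[[x, y; \mathrm{id}_R, \mathrm{id}_R]]$ is left (or right) $S$-Noetherian; since the skew multiplication with identity automorphisms is just the ordinary multiplication, this ring is $R[[x,y]]$, completing the proof.

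The only non-bookkeeping point, and hence the one I would flag as the main obstacle, is the reduction of the $\sigma$-anti-Archimedean condition to the classical one when $\sigma = \mathrm{id}$; everything else is formal. Since this reduction is transparent, the corollary falls out without additional machinery.
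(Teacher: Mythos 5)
Your proposal is correct and matches the paper's intent: the paper states this corollary without a separate proof precisely because it is the specialization $\sigma_1=\sigma_2=\mathrm{id}_R$ of the preceding corollary, with the observations you make that the $\sigma$-anti-Archimedean condition collapses to the ordinary one and that self-injectivity (via Baer's criterion) gives $\aleph_0$-injectivity. So your three checks are exactly the bookkeeping the authors leave implicit, and nothing further is needed.
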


\begin{corollary}
Let $R$ be a duo ring, $S\subseteq R$ an anti-Archimedean
denominator set \emph{(}consisting nonzero devisors\emph{)} of $R$. Assume that $R[[M]]$  is a duo ring. Then  $R[[M]]$ is left (or right) $S$-Noetherian
 if and only if  $R$ is left (or right) $S$-Noetherian and $M$ is finite generated.
\end{corollary}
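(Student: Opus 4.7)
The plan is to specialize the preceding theorem by taking the trivial twist $\omega_m = \mathrm{id}_R$ for every $m\in M$. Under this choice the skew generalized power series ring $R[[M,\omega]]$ reduces to the usual generalized power series ring $R[[M]]$, and the corollary should drop out mechanically from the theorem just proved.

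First I would verify the structural hypotheses required to invoke that theorem. The family $\{\mathrm{id}_R\}_{m\in M}$ consists of pairwise commuting automorphisms, so the commutation requirement $\omega_m\omega_n=\omega_n\omega_m$ is automatic; and each $\omega_m$ is trivially a monomorphism. Next, the $\omega_m$-anti-Archimedean condition on $S$ collapses, under $\omega_m=\mathrm{id}_R$, into the statement that there exists $s\in S$ with
\[
\Bigl(\bigcap_{l\ge 1} R s^l\Bigr)\cap S \neq \emptyset,
\]
which is exactly the ordinary anti-Archimedean hypothesis we have assumed on $S$. Finally, $R$ is duo by hypothesis, $R[[M,\omega]] = R[[M]]$ is duo by assumption, and $S$ remains a denominator set consisting of nonzero divisors.

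With these verifications in place, the preceding theorem applies directly in both directions: $R[[M]]$ is left (respectively right) $S$-Noetherian if and only if $R$ is left (respectively right) $S$-Noetherian and $M$ is finitely generated. There is no substantive obstacle to overcome beyond the (immediate) translation of the $\omega_m$-anti-Archimedean notion into the classical anti-Archimedean notion in the identity case, and checking that the assumption ``$R[[M]]$ is duo'' supplies the duo hypothesis on $R[[M,\omega]]$ needed in the theorem. Hence the corollary follows without any new argument.
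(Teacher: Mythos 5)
Your proof is correct and matches the paper's intent: the corollary is stated there as an immediate specialization of the preceding theorem to the trivial twist $\omega_m=\mathrm{id}_R$, with no separate argument given, and your verification that the $\omega_m$-anti-Archimedean condition collapses to the ordinary anti-Archimedean one (and that the duo and denominator-set hypotheses carry over) is exactly the routine checking that derivation requires.
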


\end{document}